\documentclass[11pt]{amsart}
\usepackage[margin=1.08in]{geometry}

\usepackage{amssymb}
\usepackage{microtype}
\microtypesetup{expansion=false}
\usepackage{enumitem}
\usepackage{xcolor}
\definecolor{AMSDarkBlue}{RGB}{0,51,102}
\usepackage{hyperref}
\hypersetup{
  colorlinks=true,
  linkcolor=AMSDarkBlue,
  citecolor=AMSDarkBlue,
  urlcolor=AMSDarkBlue,
  bookmarksnumbered=true,
  bookmarksopen=true,
  bookmarksopenlevel=2,
  bookmarksdepth=3,
  breaklinks=true,
  pdfpagemode=UseOutlines,
  pdftitle={A complete classification of permutation binomials of the form X\string^r(X\string^(q-1)+a) over finite fields},
  pdfsubject={Permutation binomials over finite fields},
  pdfauthor={Xiang Fan},
  pdfkeywords={permutation polynomials, permutation binomials, finite fields, Hermite's criterion, Lucas' theorem, Farey sequences}
}
\numberwithin{equation}{section}

\newtheorem{theorem}{Theorem}[section]
\newtheorem{proposition}[theorem]{Proposition}
\newtheorem{lemma}[theorem]{Lemma}
\newtheorem{corollary}[theorem]{Corollary}
\allowdisplaybreaks[1]
\title[Classification of permutation binomials]
{A complete classification of permutation binomials of the form
$X^r(X^{q-1}+a)$ over finite fields}
\author{Xiang Fan}
\address{School of Mathematics, Sun Yat-sen University, Guangzhou 510275, China}
\email{fanx8@mail.sysu.edu.cn}
\date{}
\subjclass[2020]{Primary 11T06}
\keywords{permutation polynomials, permutation binomials, finite fields, Hermite's criterion, Lucas' theorem, Farey sequences}

\begin{document}
\begin{abstract}
We classify, for every prime power \(q\) and every \(e\geqslant2\), the
permutation binomials \(X^r(X^{q-1}+a)\) over \(\mathbb F_{q^e}\).  Writing
\(\ell_j(q)=(q^j-1)/(q-1)\), such a binomial is a permutation if and only if
\(\gcd(r,q-1)=1\), \((-a)^{\ell_e(q)}\ne1\), and
\(r\ell_h(q)\equiv1\pmod{\ell_e(q)}\) for some \(1\leqslant h<e\) coprime to
\(e\).  This proves a conjecture of Masuda, Rubio, and Santiago: every
permutation binomial of this form arises from
\((X^{q^h}+aX)\circ X^r\) for a suitable \(h\).  We also determine the exact number of distinct permutation functions represented by
this family.  As a further consequence, we completely classify
the broader family \(X^r(X^{d(q-1)}+a)\) in the coprime-index case
\(\gcd(d,\ell_e(q))=1\).  The new ingredient in the main classification is the
necessity argument: selected Hermite power sums are organized so that Lucas' theorem
turns their coefficients into digit conditions; a Farey-guided local argument
then forces successive base-\(q\) digits, and cyclic rotations yield the inverse
congruence.  In characteristic \(2\), a mod-\(4\) lift to an auxiliary ring
retains endpoint information lost modulo \(2\).
\end{abstract}
\maketitle

\section{Introduction}

A polynomial over a finite field is a \emph{permutation polynomial} if it
induces a bijection.  A basic problem in finite-field arithmetic is to identify
sparse families for which this property admits an exact arithmetic
characterization.  For a monomial \(X^n\) over \(\mathbb F_Q\), the answer is
simply \(\gcd(n,Q-1)=1\).  Already for binomials, however, the exponents and
coefficients interact nontrivially.

Let \(q\) be a prime power, let \(e\geqslant1\), and let \(r\geqslant1\) be an
integer.  We study the following family of binomials over \(\mathbb F_{q^e}\):
\begin{equation}
 f(X):=X^r(X^{q-1}+a)\in\mathbb F_{q^e}[X],
 \qquad a\in\mathbb F_{q^e}^{\times}.
 \label{eq:family}
\end{equation}
Since \(\zeta^{q-1}=1\) for every \(\zeta\in\mathbb F_q^{\times}\), we have
\(f(\zeta x)=\zeta^rf(x)\) for all \(\zeta\in\mathbb F_q^{\times}\),
\(x\in\mathbb F_{q^e}\).
Thus scaling the input by \(\zeta\in\mathbb F_q^{\times}\) scales the
output by \(\zeta^r\).  This base-field symmetry is a useful structural feature
of the family~\eqref{eq:family}.
When \(e=1\), \eqref{eq:family} gives \(f(x)=(1+a)x^r\) for
\(x\in\mathbb F_q^{\times}\).  Hence \(f\) permutes \(\mathbb F_q\) if and
only if \(a\ne-1\) and \(\gcd(r,q-1)=1\).  We therefore assume
\(e\geqslant2\) from now on.

For \(j\geqslant0\), write \(\ell_j(q):=(q^j-1)/(q-1)\).
Our main theorem gives a complete arithmetic classification in every extension
degree.

\begin{theorem}
\label{thm:main}
Let \(q\) be a prime power, let \(e\geqslant2\) and \(r\geqslant1\) be
integers, and let \(a\in\mathbb F_{q^e}^{\times}\).  Then \(X^{r}(X^{q-1}+a)\) permutes
\(\mathbb F_{q^e}\) if and only if
\begin{enumerate}[label=\textup{(\roman*)},ref=\roman*]
\item \label{item:coprime} \(\gcd(r,q-1)=1\);
\item \label{item:norm} \((-a)^{\ell_e(q)}\ne1\);
\item \label{item:inverse} there exists \(h\) with
\(1\leqslant h<e\), \(\gcd(h,e)=1\), and
\(r\ell_h(q)\equiv1\pmod{\ell_e(q)}\).
\end{enumerate}
\end{theorem}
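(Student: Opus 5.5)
We split the proof into sufficiency, which is short, and necessity, which carries almost all of the work.

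\emph{Sufficiency.} Assume \eqref{item:coprime}--\eqref{item:inverse}. Set \(Q=q^e\). We use the standard criterion for polynomials of the form \(X^r g(X^{s})\) with \(s\mid Q-1\). Take \(s=q-1\) and \(m=(Q-1)/(q-1)=\ell_e(q)\). Then \(f\) permutes \(\mathbb F_Q\) if and only if \(\gcd(r,q-1)=1\) and \(x^r(x+a)^{q-1}\) permutes the subgroup \(\mu_{m}\) of \(m\)-th roots of unity. (The criterion is stated with \(\gcd(r,s)\); here that is \(\gcd(r,q-1)\).) We realize this second map through the factorization \(f=(X^{q^h}+aX)\circ X^{r'}\) up to reduction modulo \(X^Q-X\), where \(r'\) is chosen as follows.

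Condition \eqref{item:inverse} gives \(r\ell_h\equiv1\pmod{\ell_e}\). Together with \(\gcd(r,q-1)=1\), this lets us choose \(r'\) with \(rr'\equiv1\pmod{Q-1}\) and \(r'\equiv\ell_h\pmod{\ell_e}\), lifted appropriately. Then \(f(x^{r'})\) agrees with \(x^{q^h}+ax\) up to a unit twist on each coset of \(\mathbb F_q^\times\).

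The linearized polynomial \(L(X)=X^{q^h}+aX\) is injective exactly when \(x^{q^h-1}\neq -a\) has no solution. Because \(\gcd(h,e)=1\), the values \(x^{q^h-1}\) for \(x\ne0\) range over the elements of norm \(1\) to \(\mathbb F_q\). So injectivity is equivalent to \(N(-a)=(-a)^{\ell_e}\ne1\), which is \eqref{item:norm}. I would make the required exponent bookkeeping explicit by checking directly that \(x\mapsto x^r(x+a)^{q-1}\) on \(\mu_m\) is conjugate, via \(x\mapsto x^{\ell_h}\), to the map induced by \(L\) on \(\mathbb F_Q^\times/\mathbb F_q^\times\).

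\emph{Necessity of \eqref{item:coprime} and \eqref{item:norm}.} Condition \eqref{item:coprime} follows from the base-field symmetry \(f(\zeta x)=\zeta^r f(x)\). For \eqref{item:norm}: if \((-a)^{\ell_e}=1\), then \(-a=y^{q-1}\) for some \(y\in\mathbb F_Q^\times\), so \(f(y)=0=f(0)\) with \(y\neq 0\), and \(f\) is not a permutation.

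\emph{Necessity of \eqref{item:inverse}: the main obstacle.} By the reduction above, it suffices to show that if \(g(x)=x^r(x+a)^{q-1}\) permutes \(\mu_m\), then \(r\) is congruent modulo \(\ell_e\) to \(\ell_h^{-1}\) for some \(h\) coprime to \(e\). I would apply Hermite's criterion to \(g\) on \(\mu_m\): for each \(1\le t<m\), the sum \(\sum_{x\in\mu_m} g(x)^t\) must vanish. Expanding \((x+a)^{(q-1)t}\) binomially, this sum is \(m\) times the sum of \(\binom{(q-1)t}{j}a^{(q-1)t-j}\) over those \(j\) with \(rt+j\equiv0\pmod m\). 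Writing \((q-1)t\) in base \(p\), Lucas' theorem decides which binomial coefficients survive.

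I would choose special exponents \(t\), for example \(t=\ell_k\) and their small multiples, for which \((q-1)t=q^k-1\) has all base-\(q\) digits equal to \(q-1\). For these \(t\), every \(j\le q^k-1\) has a nonzero binomial coefficient modulo \(p\). The Hermite condition then says that no \(j\in[0,q^k-1]\) satisfies \(j\equiv -r\ell_k\pmod m\), except in configurations that cancel, and \eqref{item:norm} is needed to rule out that cancellation.

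Let \(s\equiv r^{-1}\). Reading these conditions for \(k=1,2,\dots\) pins down the base-\(q\) digits of the residue of \(r\ell_k\) one at a time. The target is \(s\equiv\ell_h\), whose base-\(q\) digits are all \(0\) or \(1\). To force each successive digit I would use a Farey-type argument: compare the fractions \(r\ell_k/m\) against the intervals they must avoid. After that, cyclic rotation of digits (multiplication by \(q\) modulo \(\ell_e\)) would show that the set of \(1\)-digits forms an interval. This gives \(s\equiv\ell_h\), and \(\gcd(s,\ell_e)=1\) then forces \(\gcd(h,e)=1\).

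I expect this digit-forcing step to be the hardest part, particularly in characteristic \(2\), where the binomial-coefficient information modulo \(2\) is too coarse at the endpoints. There I would lift the power sums to the Galois ring of characteristic \(4\) and run the same argument modulo \(4\).
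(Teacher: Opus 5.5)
Your sufficiency argument and your proof that (i) and (ii) are necessary are correct and match the paper. In fact \(f(x^{r'})=x^{q^h}+ax\) holds exactly, with no twist. The gap is in the necessity of (iii), which is the entire new content of the theorem. You list the paper's ingredients (Hermite sums, Lucas, a Farey-type argument, rotations, a mod-\(4\) lift), but you give no mechanism that actually forces a digit.

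The only probes you specify, \(t=\ell_k(q)\), cannot do this.
\begin{itemize}
\item Your sum at \(t\) is, up to a unit, the paper's sum at \(\tau=\ell_e(q)-t\). Raising a Hermite sum to the \(q\)-th power replaces \(\tau\) by \([q\tau]_{\ell_e(q)}\). So your conditions at \(t=\ell_k(q)\) are exactly the paper's one-term conditions at \(\tau=\ell_{e-k}(q)\) in Lemma~\ref{lem:pure}.
\item Since \(q^k-1<\ell_e(q)\), each such sum has at most one term. There is no cancellation for (ii) to rule out.
\item The resulting conditions \([-\rho\ell_k(q)]_{\ell_e(q)}\geqslant q^k\) yield only \(\rho\equiv1\pmod q\) (from \(k=e-1\)) plus boundary exclusions.
\item For \(q=e=3\), where \(\ell_e(q)=13\), they leave \(\rho\in\{1,4,7,10\}\), whereas (iii) allows only \(\rho\in\{1,10\}\).
\end{itemize}
The ``small multiples'' you mention are never specified or analysed.

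What the paper supplies, and your sketch lacks, is the inductive forcing step. Suppose the first \(N\) digits of \(\rho\) already agree with the current Farey cell.
\begin{itemize}
\item \emph{Localization.} The one-term sums at \(\tau=\ell_N(q)\) cut away the boundary pieces of the cell, so that \(\lfloor\ell_N(q)\rho/\ell_e(q)\rfloor\) is known (Lemma~\ref{lem:farey-localization}).
\item \emph{Odd characteristic.} Genuinely different probes \(\tau=C\ell_N(q)+1\) are then used, with \(C=1\) or \(C\in\{b,(p-1)b\}\), where \(b=p^{\nu_p(v_N-\varepsilon_N(\theta_-))}\). Their coefficient polynomials are either nonzero monomials or of the shapes \(X^{t-1}(1-X^{q-b})\) and \(X^{i}(1+X^{q-b})\). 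These cannot both vanish at \(a^{-\ell_e(q)}\) (Proposition~\ref{prop:odd-selector}).
\item \emph{Characteristic \(2\).} One needs the probes \(C=b,2b\), and condition (ii) exactly when \(b=q/2\). One also needs \(\tau=\ell_N(q)+2\), where the mod-\(2\) polynomial vanishes and a Kummer borrow count shows the mod-\(4\) polynomial is \(2X^{q-2}\) or \(2\) (Proposition~\ref{prop:two-selector}). Saying you would ``run the same argument modulo \(4\)'' does not identify any of this.
\end{itemize}

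Finally, your rotation step targets the wrong object. The Hermite sums constrain the digits of \(\rho=[r]_{\ell_e(q)}\) itself. These form a binary word \(W\) ending in \(01\), which is typically not an interval of ones, and nothing in your sketch transfers this information to the digits of \(r^{-1}\). The paper instead proceeds as follows (Proposition~\ref{prop:rotations}):
\begin{itemize}
\item From the quotient recursion of Corollary~\ref{cor:quotients}, a rotation \(L^dW\) is smaller than \(W\) if and only if it ends in \(1\).
\item Ranking the rotations with this property gives \(\gcd(k,e)=1\), where \(k\) is the number of ones in \(W\).
\item For \(hk\equiv1\pmod e\), the rotation \(L^hW\) has value \(\rho+q-1\), which is the congruence \(\rho\ell_h(q)\equiv1\pmod{\ell_e(q)}\).
\end{itemize}
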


Condition~\textup{(\ref{item:inverse})} is equivalent to the following:
there exists \(h\) with \(1\leqslant h<e\) and \(\gcd(h,e)=1\) such that
\[
 X^r(X^{q-1}+a)
 \equiv X^{rq^h}+aX^r
 = (X^{q^h}+aX)\circ X^r
 \pmod{X^{q^e}-X}.
\]
Under the three conditions of Theorem~\ref{thm:main},
Lemma~\ref{lem:elementary} shows that both factors in this composition
permute \(\mathbb F_{q^e}\).  Thus, in structural terms,
Theorem~\ref{thm:main} is a rigidity statement: despite the apparent freedom in the
exponent and coefficient, every permutation binomial in the family~\eqref{eq:family}
is, as a function on \(\mathbb F_{q^e}\), a composition
\((X^{q^h}+aX)\circ X^r\) of a permutation \(q\)-linearized binomial and a
permutation monomial, for some \(1\leqslant h<e\) with \(\gcd(h,e)=1\).  In
particular, Theorem~\ref{thm:main} proves the conjecture of
Masuda--Rubio--Santiago \cite[Conjecture~5.4]{MRS22}.

The criterion cleanly separates the coefficient and exponent arithmetic: the
coefficient \(a\) enters only through condition~\textup{(\ref{item:norm})},
whereas the possible residues of \(r\) modulo \(\ell_e(q)\) are determined
entirely by condition~\textup{(\ref{item:inverse})}.  The next two corollaries make this
separation explicit.

\begin{corollary}
\label{cor:number-residues}
For every prime power \(q\) and every \(e\geqslant2\),
condition~\textup{(\ref{item:inverse})} admits exactly \(\varphi(e)\)
residue classes of \(r\) modulo \(\ell_e(q)\).
\end{corollary}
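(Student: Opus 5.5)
We need to show two things: each admissible $h$ (with $1\leqslant h<e$, $\gcd(h,e)=1$) determines exactly one residue class of $r$ modulo $\ell_e(q)$, and distinct admissible $h$ give distinct classes. The count is then the number of such $h$, which is $\varphi(e)$.

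\emph{Step 1: $\ell_h(q)$ is invertible modulo $\ell_e(q)$.} We use the standard identity
\[
\gcd\bigl(q^h-1,\;q^e-1\bigr)=q^{\gcd(h,e)}-1 .
\]
Dividing by $q-1$ gives $\gcd(\ell_h(q),\ell_e(q))=\ell_{\gcd(h,e)}(q)$. When $\gcd(h,e)=1$ this equals $\ell_1(q)=1$. Hence $\ell_h(q)$ is a unit modulo $\ell_e(q)$, and the congruence $r\ell_h(q)\equiv1\pmod{\ell_e(q)}$ has exactly one solution class, namely $r\equiv \ell_h(q)^{-1}$.

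\emph{Step 2: distinct $h$ give distinct classes.} Suppose $h\neq h'$ both lie in $[1,e-1]$ and yield the same class $r$. Inverting, we get $\ell_h(q)\equiv\ell_{h'}(q)\pmod{\ell_e(q)}$. Since $e\geqslant2$ and $q\geqslant2$, for $1\leqslant h<e$ we have
\[
1\leqslant \ell_h(q)<\ell_e(q).
\]
Both values are therefore least positive residues, so they must be equal. But $\ell_j(q)=1+q+\dots+q^{j-1}$ is strictly increasing in $j$, so $h=h'$.

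\emph{Conclusion.} The admissible $h$ correspond bijectively to the residue classes permitted by condition~(\ref{item:inverse}). Their number is $\#\{1\leqslant h<e:\gcd(h,e)=1\}=\varphi(e)$, using $e\geqslant2$ so that $h=e$ is never coprime to $e$ and is correctly excluded.

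The only non-routine input is the gcd identity in Step~1. It follows from the Euclidean algorithm, since $q^e-1\equiv q^{e-h}-1 \pmod{q^h-1}$ (mirroring $\gcd(e,h)=\gcd(e-h,h)$).

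For $q=2$ we have $\ell_j(2)=2^j-1$, and the injectivity argument in Step~2 still holds verbatim.
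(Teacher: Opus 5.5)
Your proposal is correct and follows essentially the paper's argument: \(\ell_h(q)\) is a unit modulo \(\ell_e(q)\) because \(\gcd(q^h-1,q^e-1)=q-1\), and the resulting classes are distinct because \(0<\ell_h(q)<\ell_e(q)\) and \(h\mapsto\ell_h(q)\) is strictly increasing. The only difference is cosmetic: you divide the gcd identity by \(q-1\) to get \(\gcd(\ell_h(q),\ell_e(q))=1\) directly, whereas the paper uses only \(D\mid q-1\) and then obtains \(D\mid h\), \(D\mid e\) from \(\ell_h(q)\equiv h\) and \(\ell_e(q)\equiv e\pmod D\).
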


\begin{corollary}
\label{cor:number-functions}
Let \(q\) be a prime power and \(e\geqslant1\).  The number of distinct
permutations of \(\mathbb F_{q^e}\) represented by binomials
\(X^r(X^{q-1}+a)\), with \(r\geqslant1\) and
\(a\in\mathbb F_{q^e}^{\times}\), is
\((q-2)\ell_e(q)\,\varphi(e)\,\varphi(q^e-1)/\varphi(\ell_e(q))\).
\end{corollary}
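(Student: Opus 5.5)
The plan is to reduce the count to a product of two independent counts, one for the coefficient and one for the exponent, using Theorem~\ref{thm:main} together with Corollary~\ref{cor:number-residues}. Write \(Q=q^e\). The case \(e=1\) is treated separately at the end, so assume first \(e\geqslant2\).

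\textbf{Step 1: each function determines \((r \bmod Q-1,\,a)\).} I first show that the function induced by \(X^r(X^{q-1}+a)=X^{r+q-1}+aX^r\) determines both \(a\) and \(r\bmod (Q-1)\), and conversely. Since \(r\geqslant1\), the function vanishes at \(0\). On \(\mathbb F_Q^\times\) it depends only on \(r\bmod(Q-1)\). Let \(r_0,r_1\in\{1,\dots,Q-1\}\) be the representatives of \(r\) and \(r+q-1\) modulo \(Q-1\). The reduced polynomial of the function is then \(X^{r_1}+aX^{r_0}\). Because \(e\geqslant2\), we have \(q-1\not\equiv0\pmod{Q-1}\), so \(r_0\ne r_1\). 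The monomials \(X^k\) with \(1\leqslant k\leqslant Q-1\) are linearly independent as functions, so the reduced form recovers \(r_0\) (the exponent carrying coefficient \(a\)) and \(a\). Hence the distinct permutations are in bijection with pairs \((r\bmod Q-1,\,a)\) satisfying conditions (i)--(iii) of Theorem~\ref{thm:main}. Moreover, (i) and (iii) depend only on \(r\bmod Q-1\), because \(q-1\) and \(\ell_e(q)\) both divide \(Q-1\).

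\textbf{Step 2: counting \(a\).} The map \(b\mapsto b^{\ell_e(q)}\) is the norm \(\mathbb F_Q^\times\to\mathbb F_q^\times\). It is surjective, so its kernel has \(\ell_e(q)\) elements. As \(-a\) ranges over \(\mathbb F_Q^\times\), condition (ii) therefore excludes exactly \(\ell_e(q)\) values. This leaves
\[
 Q-1-\ell_e(q)=(q-2)\,\ell_e(q)
\]
admissible coefficients.

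\textbf{Step 3: counting \(r\).} Condition (iii) forces \(\gcd(r,\ell_e(q))=1\). Combined with (i), this says that \(r\) is coprime to both \(q-1\) and \(\ell_e(q)\), i.e.\ to their product \(Q-1\). Conversely, a unit modulo \(Q-1\) satisfies (i) automatically. So the admissible \(r\) are exactly the units modulo \(Q-1\) whose reduction modulo \(\ell_e(q)\) lies in one of the \(\varphi(e)\) classes given by Corollary~\ref{cor:number-residues}; each of these classes consists of units modulo \(\ell_e(q)\). The reduction map \((\mathbb Z/(Q-1))^\times\to(\mathbb Z/\ell_e(q))^\times\) is a surjective group homomorphism, so each fibre has size \(\varphi(Q-1)/\varphi(\ell_e(q))\). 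This gives
\[
 \varphi(e)\,\frac{\varphi(Q-1)}{\varphi(\ell_e(q))}
\]
admissible residues \(r\bmod(Q-1)\).

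Multiplying the counts from Steps 2 and 3 gives the stated formula.

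\textbf{The case \(e=1\).} Here the function is \((1+a)x^r\), so it is determined by the coefficient \(c=1+a\) and by \(r\bmod(q-1)\). The constraints are \(c\ne0\) (for bijectivity) and \(c\ne1\) (since \(a\ne0\)), which leaves \(q-2\) values of \(c\). The exponent must satisfy \(\gcd(r,q-1)=1\), giving \(\varphi(q-1)\) residues. The total is \((q-2)\varphi(q-1)\), which agrees with the formula because \(\ell_1(q)=1\) and \(\varphi(1)=1\).

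The only delicate point is the injectivity in Step 1: one must make sure that the two exponents never collide modulo \(Q-1\), which is exactly where \(e\geqslant2\) is used.
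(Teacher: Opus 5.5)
Your overall route is the paper's: count the coefficients through (ii), count exponent classes modulo \(Q-1\) through Corollary~\ref{cor:number-residues} and the surjection of unit groups, and show that admissible pairs \((r \bmod (Q-1),a)\) correspond bijectively to functions. Steps 2 and 3 are correct. Taking representatives in \(\{1,\dots,Q-1\}\) also neatly avoids the paper's appeal to (i) for the nonvanishing of the residue of \(r+q-1\).

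Step 1, however, has a hole. You recover \(r_0\) as ``the exponent carrying coefficient \(a\)''. When \(a=1\), both terms of \(X^{r_1}+X^{r_0}\) carry coefficient \(1\), so the reduced form determines only the unordered set \(\{r_0,r_1\}\). This case is not vacuous. If \(q\) and \(e\) are both odd, then \(\ell_e(q)\equiv e\equiv1\pmod 2\), so \((-1)^{\ell_e(q)}=-1\ne1\) and \(a=1\) satisfies (ii). For example, \(X^3+X\) on \(\mathbb F_{27}\) (\(r=1\), \(a=1\), \(h=1\)) satisfies all three conditions.

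To close the gap, you must exclude a second admissible pair \((r',1)\) whose support matches yours in the opposite order: \(r'\equiv r+q-1\) and \(r'+q-1\equiv r\pmod{Q-1}\). Substituting the first congruence into the second gives \(2(q-1)\equiv0\pmod{Q-1}\), i.e. \(\ell_e(q)\mid 2\). This is impossible, since \(\ell_e(q)\geqslant q+1\geqslant3\) for \(e\geqslant2\). This is exactly the ``opposite order'' step in the paper's proof. It is a second place where \(e\geqslant2\) is needed, beyond the non-collision \(r_0\ne r_1\) that you identified as the only delicate point.

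For \(e=1\) there is a smaller omission of the same kind. You note only that the function depends on \((c,\,r\bmod(q-1))\), but the count needs the converse. That converse follows by evaluating at \(1\) and at a generator of \(\mathbb F_q^\times\).
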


The proofs of Corollaries~\ref{cor:number-residues} and~\ref{cor:number-functions}
are given at the end of Section~\ref{sec:propagation}.

\subsection*{Related work and novelty}

Two strands of earlier work place Theorem~\ref{thm:main} in context.  In the
broader study of sparse permutation polynomials, Akbary--Ghioca--Wang \cite{AGW09} studied permutation polynomials with prescribed support,
while Masuda--Zieve \cite{MZ09} obtained general arithmetic
restrictions and existence results for permutation binomials.  For
the specific family~\eqref{eq:family}, Hou \cite{Hou16} obtained the \(e=2\)
characterization, and Liu \cite{Liu19} studied \(e=3\) in odd characteristic together with
partial results for general extension degree; see also the
summary in \cite[Proposition~1.1]{MRS22}.  Most directly,
Masuda--Rubio--Santiago \cite{MRS22}, using the Masuda--Panario--Wang form of
Hermite's criterion \cite[Corollary~2]{MPW06}, classified the
family~\eqref{eq:family} for arbitrary \(q\) in extension degrees \(e=2,3,4\)
and for odd prime \(q\) in degrees \(e=5,6\) \cite[Theorem~1.3]{MRS22}.  They also constructed
permutation binomials by composing a permutation monomial with
\(X^{q^h}+aX\) \cite[Theorem~3.4]{MRS22} and conjectured that these
constructions exhaust the family~\eqref{eq:family}
\cite[Conjecture~5.4]{MRS22}.  Theorem~\ref{thm:main} resolves this conjecture
completely for every prime power \(q\) and every extension degree \(e\geqslant2\).
Hou--Pallozzi Lavorante \cite{HPL23} subsequently developed an equivalence
framework for permutation binomials and obtained further nonexistence results
for the broader family \(X^r(X^{d(q-1)}+a)\), principally over
\(\mathbb F_{q^2}\).

For the nontrivial case \(q>2\), Masuda--Rubio--Santiago \cite{MRS22} also
proved several necessary restrictions valid for arbitrary \(e\).  In
particular, they showed that the least residue of \(r\) modulo \(\ell_e(q)\)
must be congruent to \(1\pmod q\) \cite[Proposition~4.1]{MRS22}, exactly the least-significant
base-\(q\) digit condition recovered below in Lemma~\ref{lem:pure}.  Their complete classifications, however, were limited to the fixed extension
degrees listed above.  Thus earlier work already supplies a uniform
least-significant-digit restriction for arbitrary \(e\) in the nontrivial case
\(q>2\).  What remained was to propagate this initial restriction far enough to
determine the entire residue class of the exponent.  The argument below achieves
this uniformly in the extension degree, with the final cyclic-rotation argument
completing the passage to the full residue class.  Section~\ref{sec:general-d}
then applies the resulting classification to the broader family
\(X^r(X^{d(q-1)}+a)\), completely settling the case
\(\gcd(d,\ell_e(q))=1\) and isolating the first genuinely new regime.

\subsection*{Proof strategy}

The sufficiency direction follows from the construction in
\cite[Theorem~3.4]{MRS22} and is recorded here in Lemma~\ref{lem:elementary}.
The new work lies in the necessity direction, where the key difficulty is local-to-global:
the known least-significant-digit restriction controls only the first base-\(q\) digit
of the exponent residue, whereas the classification requires its entire residue class
modulo \(\ell_e(q)\).  The necessity proof has three stages.

First, selected Hermite power sums, together with Lucas' theorem, convert the
permutation hypothesis into digit restrictions.  In particular, the case \(N=1\)
forces \(\rho\equiv1\pmod q\), which supplies the initial base-\(q\) digit and the
starting Farey cell.  Second, the Farey intervals organize the relevant floor data.
At level \(N\), the current Farey cell is first localized to an interval on which the
floor data are stable, and the local coefficient argument then forces the next
digit \(v_N\).  Passing from \(F_N\) to \(F_{N+1}\) gives the next cell, so the same
argument can be iterated.  This produces a binary quotient recursion uniformly in the
extension degree.  Third, that recursion is read as a length-\(e\) binary word in
base \(q\).  Ordering its cyclic rotations converts the digit information into the
inverse congruence in condition~\textup{(\ref{item:inverse})}.

Only the second stage depends on the characteristic.  In odd characteristic the
next-digit obstruction is detected modulo \(p\).  In characteristic \(2\), reduction
modulo \(2\) loses the endpoint distinction needed by the forcing argument, so we
lift the relevant coefficient identities to an auxiliary ring of characteristic
\(4\), retaining the missing mod-\(4\) information.  After the next digit is forced,
the passage to the next Farey level and the final cyclic-rotation argument are
characteristic-free.  Related finite-word constructions and lexicographic orderings
of cyclic rotations are discussed in \cite{deLuca97,Aigner2013,Zamboni26}; the
consequences needed below are proved here.  Section~\ref{sec:coefficients} carries
out the first stage; Sections~\ref{sec:endpoints} and~\ref{sec:selector} develop the
interval machinery and the one-step digit forcing; and Section~\ref{sec:propagation}
iterates that forcing and completes the third stage.  Finally,
Section~\ref{sec:general-d} records the coprime-index extension to the broader
family \(X^r(X^{d(q-1)}+a)\) and the remaining open regime.

\section{Hermite power sums and digit conditions}
\label{sec:coefficients}

This section converts the permutation hypothesis into the coefficient and digit
constraints used in the Farey recursion.  We first isolate the elementary
permutation conditions and the relevant power-sum identity, and then use Lucas'
theorem to extract the digit information that initiates the recursion.

\begin{lemma}
\label{lem:elementary}
Let \(q\) be a prime power, let \(e\geqslant2\) and \(r\geqslant1\) be
integers, and let \(a\in\mathbb F_{q^e}^{\times}\).  If
\(X^r(X^{q-1}+a)\) permutes \(\mathbb F_{q^e}\), then
\(\gcd(r,q-1)=1\) and \((-a)^{\ell_e(q)}\ne1\).  Conversely, these
two conditions together with
\(r\ell_h(q)\equiv1\pmod{\ell_e(q)}\) for some
\(1\leqslant h<e\) coprime to \(e\) imply that \(X^r(X^{q-1}+a)\) permutes
\(\mathbb F_{q^e}\).
\end{lemma}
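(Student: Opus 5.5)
The plan has two parts, necessity of (i)–(ii) and sufficiency. Both rest on one reduction: restrict \(f(X)=X^r(X^{q-1}+a)\) to \(\mathbb F_q\), and factor it through the \((q-1)\)-st power map. Throughout, write \(Q=q^e\).

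\emph{Necessity of (i).} I would use the base-field symmetry \(f(\zeta x)=\zeta^r f(x)\) for \(\zeta\in\mathbb F_q^\times\). If \(d=\gcd(r,q-1)>1\), choose \(\zeta\in\mathbb F_q^\times\) of order \(d\), so \(\zeta\ne1\) and \(\zeta^r=1\). Then for any \(x\neq0\) we get \(f(\zeta x)=f(x)\) with \(\zeta x\ne x\), contradicting injectivity. Hence \(\gcd(r,q-1)=1\).

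\emph{Necessity of (ii).} Write \(f(x)=x^r g(x^{q-1})\) with \(g(Y)=Y+a\). By the standard criterion (Park–Lee/Zieve/Wang), with \(s=(Q-1)/(q-1)=\ell_e(q)\), \(f\) permutes \(\mathbb F_Q\) if and only if two things hold: \(\gcd(r,q-1)=1\), and \(x^r g(x)^{q-1}\) permutes the group \(\mu_s\) of \(s\)-th roots of unity. So if \(f\) permutes, \(g\) has no zero on \(\mu_s\), i.e. \(-a\notin\mu_s\), i.e. \((-a)^{\ell_e(q)}\ne1\). This can also be argued directly without the criterion: if \(-a=y^{q-1}\) for some \(y\ne0\), then \(f(y)=0=f(0)\), contradicting injectivity.

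\emph{Sufficiency.} I first check that condition (iii) makes \(f\) agree with \((X^{q^h}+aX)\circ X^r\) on \(\mathbb F_Q\).
\begin{itemize}
\item From \(r\ell_h(q)\equiv1\pmod{\ell_e(q)}\), multiply by \(q-1\) to get \(r(q^h-1)\equiv q-1\pmod{Q-1}\).
\item Hence \(x^{r+q-1}=x^{rq^h}\) for all \(x\in\mathbb F_Q\), so \(f(x)=(x^r)^{q^h}+a\,x^r\).
\end{itemize}
Next, \(X^r\) permutes \(\mathbb F_Q\), i.e. \(\gcd(r,Q-1)=1\). Indeed, \(\gcd(r,q-1)=1\) holds by (i), and \(\gcd(r,\ell_e(q))=1\) holds because \(r\) is invertible modulo \(\ell_e(q)\) by (iii). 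Since \(Q-1=(q-1)\ell_e(q)\), these two together give \(\gcd(r,Q-1)=1\).

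Finally, the \(q\)-linearized map \(L(y)=y^{q^h}+ay\) is \(\mathbb F_q\)-linear, so it suffices to show \(\ker L=0\). Suppose \(y\ne0\) with \(y^{q^h-1}=-a\). Since \(\gcd(h,e)=1\), we have \(\gcd(q^h-1,Q-1)=q-1\), so the image of \(y\mapsto y^{q^h-1}\) on \(\mathbb F_Q^\times\) equals the image of \(y\mapsto y^{q-1}\). That image is \(\mu_{\ell_e(q)}\). Thus \(-a\in\mu_{\ell_e(q)}\), contradicting (ii). So \(L\) is bijective, and \(f=L\circ X^r\) permutes \(\mathbb F_Q\).

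The only step needing care is the gcd computation \(\gcd(q^h-1,q^e-1)=q^{\gcd(h,e)}-1\), together with the identification of the image of the power map on the cyclic group \(\mathbb F_Q^\times\). Both are standard, so I expect no real obstacle.
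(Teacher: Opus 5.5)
Your proposal is correct and follows essentially the same route as the paper: the base-field symmetry \(f(\zeta x)=\zeta^r f(x)\) for (i), a nonzero root \(y\) with \(y^{q-1}=-a\) for (ii), and for sufficiency the congruence \(r(q^h-1)\equiv q-1\pmod{q^e-1}\), which writes \(f\) as \((X^{q^h}+aX)\circ X^r\), with \(\gcd(q^h-1,q^e-1)=q-1\) controlling the kernel of the linearized factor. The appeal to the Park--Lee/Zieve/Wang criterion is unnecessary, since your direct root argument already suffices, exactly as in the paper.
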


\begin{proof}
Write \(f(X)=X^r(X^{q-1}+a)\).  Fix \(x\in\mathbb F_{q^e}^{\times}\).
Since \(f(0)=0\) and \(f\) is
injective, \(f(x)\ne0\).  For \(\zeta\in\mathbb F_q^{\times}\),
\(f(\zeta x)=\zeta^{r}f(x)\).  Injectivity of \(f\)
therefore forces \(\zeta\mapsto\zeta^r\) to be injective on
\(\mathbb F_q^{\times}\); hence \(\gcd(r,q-1)=1\).  The equation \(x^{q-1}=-a\) has a nonzero solution exactly when
\((-a)^{\ell_e(q)}=1\).  This proves condition~\textup{(\ref{item:norm})}.

For the converse, the inverse congruence implies
\(\gcd(r,\ell_e(q))=1\).  Together with \(\gcd(r,q-1)=1\), this gives
\(\gcd(r,q^e-1)=1\), so \(X^{r}\) is a permutation monomial.
Since \(\gcd(h,e)=1\),
\(\gcd(q^h-1,q^e-1)=q-1\).  Hence the \(\mathbb F_q\)-linear map induced by
\(X^{q^h}+aX\) has a nonzero kernel exactly when
\((-a)^{\ell_e(q)}=1\).  Finally
\[
 r(q^h-1)\equiv q-1\pmod{q^e-1},
\]
so \(f\) induces the composition of these two permutations.
\end{proof}

\smallskip
\noindent\textbf{Hermite's criterion.}\ For a finite field \(K=\mathbb F_Q\) of characteristic \(p\), a polynomial
\(G\in K[X]\) permutes \(K\) if and only if \textup{(i)} it has exactly
one zero in \(K\), and \textup{(ii)} for every
\(1\leqslant n\leqslant Q-2\) with \(p\nmid n\), the reduction of
\(G^n\) modulo \(X^Q-X\) has degree at most \(Q-2\)
\cite{Hermite1863,Dickson1896}; see also
\cite[Theorem~7.4]{LidlNiederreiter1997}.  In particular, if
\(G\) permutes \(K\), permutation invariance of the sum gives
\[
 \sum_{x\in K}G(x)^n=0
 \qquad\text{for all }1\leqslant n\leqslant Q-2.
\]
This includes the case \(p\mid n\).

For an integer \(z\) and a positive integer \(M\), write \([z]_M\) for the least
nonnegative residue modulo \(M\).  Put
\(\rho:=[r]_{\ell_e(q)}\), \(0\leqslant\rho<\ell_e(q)\), reserving \(r\) for the actual exponent.  Write
\(q=p^m\), with \(p\) prime.  For \(1\leqslant\tau<\ell_e(q)\), put
\[
 H_\tau:=q^e-1-(q-1)\tau,\qquad y_\tau:=[\rho\tau]_{\ell_e(q)},
\]
and define
\[
 P_{\rho,\tau}(X):=\sum_{i=0}^{q-2}
 \binom{H_\tau}{y_\tau+i\ell_e(q)}X^i\in\mathbb F_p[X],
\]
where an out-of-range binomial coefficient is zero.  The quantities \(H_\tau\), \(y_\tau\), and
\(P_{\rho,\tau}\) are defined relative to the fixed choices of \(q\) and
\(e\); this dependence is suppressed from the notation.  The quantity
\(y_\tau\) also depends on the current residue \(\rho\).

The identity below is \cite[Corollary~2]{MPW06} specialized to \(N=H_\tau\),
with the unrestricted formulation given in \cite[Proposition~2.1]{MRS22}.  We
include a direct derivation to avoid the degree hypothesis in the original statement.

\begin{lemma}[Power-sum identity]
\label{lem:power-sum}
For \(1\leqslant\tau<\ell_e(q)\) and \(\rho=[r]_{\ell_e(q)}\),
\begin{equation}
 \sum_{x\in\mathbb F_{q^e}}\bigl(x^r(x^{q-1}+a)\bigr)^{H_\tau}
 =-a^{H_\tau-y_\tau}P_{\rho,\tau}(a^{-\ell_e(q)}).
 \label{eq:power-sum}
\end{equation}
If \(X^r(X^{q-1}+a)\) permutes \(\mathbb F_{q^e}\), then
\(P_{\rho,\tau}(a^{-\ell_e(q)})=0\); in particular,
\(P_{\rho,\tau}\) cannot be a nonzero monomial.
\end{lemma}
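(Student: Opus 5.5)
The plan is to expand the power directly and collect the terms that survive summation over \(\mathbb F_{q^e}\). Write \(Q=q^e\) and \(L=\ell_e(q)\). By the binomial theorem,
\[
 \bigl(x^r(x^{q-1}+a)\bigr)^{H_\tau}=\sum_{k=0}^{H_\tau}\binom{H_\tau}{k}a^{H_\tau-k}x^{rH_\tau+(q-1)k}.
\]
I will use the standard fact \(\sum_{x\in\mathbb F_Q}x^m=-1\) if \(m\geqslant1\) and \((Q-1)\mid m\), and \(=0\) otherwise (with \(0^0=1\), so \(m=0\) gives \(Q\cdot1=0\)). The exponent \(m_k=rH_\tau+(q-1)k\) is at least \(rH_\tau\geqslant H_\tau>0\), since \(\tau<L\) gives \(H_\tau\geqslant q-1>0\). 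So each term contributes \(-\binom{H_\tau}{k}a^{H_\tau-k}\) exactly when \((Q-1)\mid m_k\), and nothing otherwise.

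Next I solve the divisibility condition. Since \(H_\tau=(q-1)(L-\tau)\), we have \(m_k=(q-1)\bigl(r(L-\tau)+k\bigr)\). Hence \((Q-1)\mid m_k\) if and only if \(L\mid r(L-\tau)+k\), that is, \(k\equiv r\tau\equiv\rho\tau\pmod L\). Equivalently \(k=y_\tau+iL\) with \(i\geqslant0\). Since \(k\leqslant H_\tau=(q-1)(L-\tau)<(q-1)L\), we get \(i\leqslant q-2\). For \(k\) beyond \(H_\tau\) the binomial coefficient vanishes, which matches the convention that out-of-range binomials are zero.

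The sum therefore equals
\[
 -\sum_{i=0}^{q-2}\binom{H_\tau}{y_\tau+iL}a^{H_\tau-y_\tau-iL}=-a^{H_\tau-y_\tau}P_{\rho,\tau}(a^{-L}),
\]
where the binomial coefficients are read in \(\mathbb F_p\subseteq\mathbb F_Q\) and \(a^{-iL}\) makes sense because \(a\neq0\). This is \eqref{eq:power-sum}.

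For the consequence, suppose \(f\) permutes \(\mathbb F_Q\). Since \(1\leqslant H_\tau\leqslant Q-2\), the summation form of Hermite's criterion recorded above gives \(\sum_x f(x)^{H_\tau}=0\). Because \(a\neq0\), this forces \(P_{\rho,\tau}(a^{-L})=0\). A nonzero monomial \(cX^i\) with \(c\in\mathbb F_p^\times\) has no nonzero roots, and \(a^{-L}\neq0\), so \(P_{\rho,\tau}\) cannot be a nonzero monomial.

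The only delicate points are the bookkeeping ones: checking \(H_\tau\leqslant Q-2\) (true since \(\tau\geqslant1\)), and confirming that the range of \(i\) is exactly \(0,\dots,q-2\) so that it matches the definition of \(P_{\rho,\tau}\). I do not expect any real obstacle.
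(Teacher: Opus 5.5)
Your proposal is correct and follows essentially the same route as the paper. You expand the binomial, use the power-sum \(\sum_x x^m\), and reduce \((q^e-1)\mid rH_\tau+(q-1)k\) to \(k\equiv\rho\tau\pmod{\ell_e(q)}\), with the range \(0\leqslant i\leqslant q-2\) coming from \(H_\tau<(q-1)\ell_e(q)\). You then conclude via the Hermite power-sum vanishing and \(a^{-\ell_e(q)}\neq0\); your bookkeeping on \(1\leqslant H_\tau\leqslant q^e-2\) and on \(m_k\geqslant1\) (so that \(x=0\) contributes nothing) is exactly what is needed.
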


\begin{proof}
Expand \(f(x)^{H_\tau}\).  For a positive integer \(d\),
\(\sum_xx^d\) is \(-1\) when \((q^e-1)\mid d\) and zero otherwise, and
\[
 rH_\tau+(q-1)A
 =r(q^e-1)+(q-1)(A-r\tau),
\]
so, since \(q^e-1=(q-1)\ell_e(q)\),
\[
 (q^e-1)\mid rH_\tau+(q-1)A
 \Longleftrightarrow
 A\equiv r\tau\equiv\rho\tau\pmod{\ell_e(q)}.
\]
Since \(H_\tau<q^e-1=(q-1)\ell_e(q)\), the relevant lower indices are
exactly \(y_\tau+i\ell_e(q)\), \(0\leqslant i\leqslant q-2\), yielding
\eqref{eq:power-sum}.  If \(f\) permutes \(\mathbb F_{q^e}\),
the power-sum consequence of Hermite's criterion makes the left-hand side zero.  Since
\(a^{-\ell_e(q)}\ne0\), a one-term polynomial cannot vanish there.
\end{proof}

To determine which coefficients of \(P_{\rho,\tau}\) survive modulo \(p\),
we use Lucas' theorem in its digitwise form.

\smallskip
\noindent\textbf{Lucas' theorem.} If \(p\) is prime and
\(n=\displaystyle\sum_{i\geqslant0}n_ip^i\), \(k=\displaystyle\sum_{i\geqslant0}k_ip^i\), with
\(0\leqslant n_i,k_i<p\), then \(\displaystyle \binom nk\equiv\prod_{i\geqslant0}\binom{n_i}{k_i}\pmod p\), and
\[
 \binom nk\not\equiv0\pmod p\Longleftrightarrow k_i\leqslant n_i
 \text{ for all }i.
\]
For a modern reference, see \cite[Theorem~10.2.1]{AndreescuAndrica2009};
for the original paper, see \cite{Lucas1878}.

We first apply Lucas' theorem at \(\tau=\ell_N(q)\) for \(1\leqslant N<e\).

\begin{lemma}[Power sums at \(\tau=\ell_N(q)\)]
\label{lem:pure}
Let \(1\leqslant N<e\), set
\(n=\lfloor \ell_N(q)\rho/\ell_e(q)\rfloor\) and
\(\delta=[n-\rho]_{q^N}\).  Then
\(P_{\rho,\ell_N(q)}\) has at most one nonzero term: if
\(\delta\leqslant q-2\), it is a nonzero scalar multiple of
\(X^\delta\), while if \(\delta\geqslant q-1\), it is zero.
In particular, if \(X^r(X^{q-1}+a)\) permutes \(\mathbb F_{q^e}\), then the case
\(N=1\) forces
\[
 1\leqslant \rho<\ell_e(q),\qquad \rho\equiv1\pmod q.
\]
\end{lemma}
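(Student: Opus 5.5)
The plan is to compute \(H_\tau\) explicitly at \(\tau=\ell_N(q)\) and read off its base-\(p\) digits. Then Lucas' theorem reduces the coefficients of \(P_{\rho,\ell_N(q)}\) to a single congruence on the index \(i\).

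First, \(H_{\ell_N(q)}=q^e-1-(q^N-1)=q^e-q^N=(q-1)(q^N+q^{N+1}+\dots+q^{e-1})\). In base \(q\) its digits are \(0\) in positions \(0,\dots,N-1\) and \(q-1\) in positions \(N,\dots,e-1\). Since \(q=p^m\), in base \(p\) the lowest \(mN\) digits of \(H\) are \(0\) and the remaining \(m(e-N)\) digits are \(p-1\).

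Take any \(0\leqslant k\leqslant H\). Then \(k<q^e\), so by Lucas' theorem \(\binom Hk\not\equiv0\pmod p\) if and only if the lowest \(mN\) base-\(p\) digits of \(k\) vanish, i.e. \(q^N\mid k\). In that case the product \(\prod\binom{p-1}{k_i}\) is a nonzero element of \(\mathbb F_p\). Conversely, every multiple of \(q^N\) with \(0\leqslant k<q^e-1\) automatically satisfies \(k\leqslant q^e-q^N=H\).

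Next I identify which lower indices \(k=y+i\ell_e(q)\), \(0\leqslant i\leqslant q-2\), are divisible by \(q^N\), where \(y=y_{\ell_N(q)}=\rho\ell_N(q)-n\ell_e(q)\). Then \(k=\rho\ell_N(q)+(i-n)\ell_e(q)\). Since \(\ell_e(q)=\ell_N(q)+q^N\ell_{e-N}(q)\equiv\ell_N(q)\pmod{q^N}\), we get
\[
k\equiv\ell_N(q)\,(\rho+i-n)\pmod{q^N}.
\]
Because \(\ell_N(q)\equiv1\pmod q\), the factor \(\ell_N(q)\) is invertible modulo \(q^N\). Hence \(q^N\mid k\) if and only if \(i\equiv n-\rho\pmod{q^N}\), i.e. \(i\equiv\delta\pmod{q^N}\).

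As \(0\leqslant i\leqslant q-2<q^N\), there is at most one admissible \(i\): namely \(i=\delta\), and it exists precisely when \(\delta\leqslant q-2\). For this \(i\) we have \(k\leqslant \ell_e(q)-1+(q-2)\ell_e(q)<q^e-1\), so by the remark above \(k\leqslant H\), and the coefficient is nonzero. This gives the monomial-or-zero dichotomy.

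For the consequence, take \(N=1\). Since \(0\leqslant\rho<\ell_e(q)\) and \(\ell_1(q)=1\), we have \(n=\lfloor\rho/\ell_e(q)\rfloor=0\), so \(\delta=[-\rho]_q\leqslant q-1\). If \(f\) permutes \(\mathbb F_{q^e}\), Lemma~\ref{lem:power-sum} (applicable since \(1\leqslant\ell_1(q)<\ell_e(q)\) for \(e\geqslant2\)) forbids \(P_{\rho,1}\) from being a nonzero monomial. Hence \(\delta\geqslant q-1\), so \(\delta=q-1\), i.e. \(\rho\equiv1\pmod q\). In particular \(\rho\neq0\), so \(1\leqslant\rho<\ell_e(q)\).

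The only step needing care is the bound \(k\leqslant H\), which the divisibility by \(q^N\) settles as shown above.
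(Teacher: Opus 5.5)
Your proposal is correct and follows essentially the same route as the paper. The base-$p$ digits of $H_{\ell_N(q)}=q^e-q^N$ and Lucas' theorem reduce survival of a coefficient to $q^N\mid y_{\ell_N(q)}+i\ell_e(q)$; the congruence $\ell_e(q)\equiv\ell_N(q)\pmod{q^N}$ and the invertibility of $\ell_N(q)$ then force $i=[n-\rho]_{q^N}$, and Lemma~\ref{lem:power-sum} settles the case $N=1$. Your explicit check that the surviving lower index does not exceed $H_{\ell_N(q)}$, and your deduction of $\rho\ne0$ directly from $\rho\equiv1\pmod q$, only spell out points the paper treats briefly.
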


\begin{proof}
Here \(H_{\ell_N(q)}=q^e-q^N\).  In base \(p\), its lowest \(mN\)
digits are zero and all higher digits are \(p-1\).  By
Lucas' theorem, a lower index survives exactly when it is
divisible by \(q^N\).  Since
\(\rho\ell_N(q)=n\ell_e(q)+y_{\ell_N(q)}\) and \(\ell_e(q)\equiv \ell_N(q)\pmod{q^N}\),
\[
 y_{\ell_N(q)}+i\ell_e(q)\equiv(\rho-n+i)\ell_N(q)\pmod{q^N}.
\]
As \(\ell_N(q)\) is invertible modulo \(q^N\), there is at most one
\(i\in\{0,\ldots,q-2\}\) for which the coefficient is nonzero, namely
\([n-\rho]_{q^N}\).  This proves the assertion about
\(P_{\rho,\ell_N(q)}\).  At \(N=1\) one has \(n=0\).  If \(f\) permutes \(\mathbb F_{q^e}\),
Lemma~\ref{lem:power-sum} rules out the nonzero monomial case; hence \([-\rho]_q=q-1\), so \(\rho\equiv1\pmod q\).
The case \(\rho=0\) would instead give the forbidden nonzero constant
polynomial, and is therefore excluded.
\end{proof}

The case \(N=1\) of Lemma~\ref{lem:pure} recovers
\cite[Proposition~4.1]{MRS22}.
At the first level, the lemma initializes the base-\(q\) digit recursion; for general \(N\), the same one-term obstruction will be used below to remove the boundary part of the current Farey state.

\section{Farey intervals and binary sequences}
\label{sec:endpoints}

Fix an integer \(q>2\) in this section and write
\(\ell_j(q)=(q^j-1)/(q-1)\).  The Farey sequences and mechanical words below are independent
of \(q\), whereas the numerical quantities obtained by reading those words in base
\(q\) depend on the chosen base.  For recurring base-\(q\) quantities, we display this dependence by a superscript
\((q)\); proof-local auxiliary quantities remain unadorned.

Let \(F_N\) be the increasing Farey sequence of reduced fractions in \([0,1]\)
with denominator at most \(N\), including \(0=0/1\) and \(1=1/1\).  For
\(N\geqslant4\), its beginning and end are
\[
 F_N=\left(\frac01,\frac1N,\frac1{N-1},\ldots,
 \frac{N-2}{N-1},\frac{N-1}{N},\frac11\right).
\]
Only the formulas below and standard facts about Farey neighbors will be used.
Our goal is to turn each Farey interval into stable floor data that can be propagated
one level at a time; all consequences needed later are proved explicitly.

For a \emph{word} \(w=(w_i)_{i\geqslant0}\in\{0,1\}^{\mathbb N}\), put
\[
\mathcal{V}_N^{(q)}(w):=\sum_{i=0}^{N-1}w_iq^{N-1-i}\quad\text{for all }N\geqslant1,\qquad
\mathcal{V}_0^{(q)}(w):=0.
\]
Recall that for \(t\in\mathbb R\), \(\lfloor t\rfloor\) is the greatest integer at most \(t\), while \(\lceil t\rceil\) is the least integer at least \(t\).
For \(0\leqslant\theta\leqslant1\), define the words
\(c^+(\theta)=(c_i^+(\theta))_{i\geqslant0}\) and
\(c^-(\theta)=(c_i^-(\theta))_{i\geqslant0}\) by
\[
 c_i^+(\theta):=
 \lfloor(i+2)\theta\rfloor-\lfloor(i+1)\theta\rfloor,\qquad
 c_i^-(\theta):=
 \lceil(i+2)\theta\rceil-\lceil(i+1)\theta\rceil.
\]
These are, respectively, the \emph{lower mechanical word} and the
\emph{upper mechanical word}; see \cite[Definition~8.12]{Aigner2013}.

Suppose \(\theta\in(0,1)\) is rational, and write \(\theta=c/d\) in lowest terms.
Then both sequences are \(d\)-periodic; see \cite[Example~8.18]{Aigner2013}.
Put
\begin{align}
 B_\theta^{(q)}&:=\mathcal{V}_d^{(q)}(c^-(\theta)),&
 \mu_\theta^{(q)}&:=\frac{B_\theta^{(q)}+q-1}{\ell_d(q)+q-1}
 \label{eq:endpoints}\\
 \alpha_\theta^{(q)}&:=\frac{B_\theta^{(q)}}{\ell_d(q)},&
 \beta_\theta^{(q)}&:=\frac{B_\theta^{(q)}+q-1}{\ell_d(q)}.\notag
\end{align}
For \(0\leqslant i<d\), the floor and ceiling differences can differ only if one
of \((i+1)\theta\) and \((i+2)\theta\) is an integer.  Since \(\theta=c/d\)
is in lowest terms, this occurs only for \(i=d-2,d-1\), where the integer is
\(d\theta=c\).  Hence, within each period of length \(d\), the words
\(c^-(\theta)\) and \(c^+(\theta)\) differ only in the last two positions:
\(c^-(\theta)\) ends in \(01\), whereas \(c^+(\theta)\) ends in \(10\).
Consequently,
\[
 \mathcal{V}_d^{(q)}(c^+(\theta))=B_\theta^{(q)}+q-1,\qquad
 \sum_{i\geqslant0}c_i^\pm(\theta) q^{-i-1}
 =\begin{cases}
   \beta_\theta^{(q)}/(q-1),&\text{for }c^+(\theta),\\
   \alpha_\theta^{(q)}/(q-1),&\text{for }c^-(\theta).
  \end{cases}
\]
Each period contains both digits.
It follows that \(0<B_\theta^{(q)}<B_\theta^{(q)}+q-1<\ell_d(q)\), and hence
\(0<\alpha_\theta^{(q)}<\mu_\theta^{(q)}<\beta_\theta^{(q)}<1\).
To allow the interval notation below to include the boundary pairs with
\(\theta_-=0\) or \(\theta_+=1\), set \(\beta_0^{(q)}:=0\) and
\(\alpha_1^{(q)}:=1\); we also use the all-zero sequence \(c^+(0)\) and the
all-one sequence \(c^-(1)\).  The next lemma translates these periodic words
into the floor and ceiling data used below.

\begin{lemma}
For every rational \(\theta\in(0,1)\) and every
\(N\geqslant1\),
\begin{align}
 \lfloor \ell_N(q)\beta_\theta^{(q)}\rfloor&=q\mathcal{V}_{N-1}^{(q)}(c^+(\theta)),&
 \lceil \ell_N(q)\alpha_\theta^{(q)}\rceil-1&=q\mathcal{V}_{N-1}^{(q)}(c^-(\theta))
 \label{eq:four-endpoints}\\
 \lfloor(\ell_N(q)+1)\beta_\theta^{(q)}\rfloor&=\mathcal{V}_N^{(q)}(c^+(\theta)),&
 \lceil(\ell_N(q)+1)\alpha_\theta^{(q)}\rceil-1&=\mathcal{V}_N^{(q)}(c^-(\theta)).\notag
\end{align}
The formulas involving \(\beta_0^{(q)}\) and \(\alpha_1^{(q)}\) hold as well.
\end{lemma}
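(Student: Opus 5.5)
The plan is to compare two expansions: the exact base-\(q\) expansion of \(\beta_\theta^{(q)}/(q-1)\) and \(\alpha_\theta^{(q)}/(q-1)\) supplied by the periodic words, and the expansions of \(\ell_N(q)\beta\) and \((\ell_N(q)+1)\beta\). Write \(\beta=\beta_\theta^{(q)}\) and \(s=\sum_{i\geqslant0}c_i^+(\theta)q^{-i-1}=\beta/(q-1)\). Since \(\ell_N(q)=(q^N-1)/(q-1)\), we have
\[
 \ell_N(q)\beta=(q^N-1)s=q^Ns-s .
\]
Here \(q^Ns=\mathcal V_N^{(q)}(c^+)+t_N\), where the tail is
\(t_N=\sum_{i\geqslant N}c_i^+q^{N-1-i}\in[0,1/(q-1)]\). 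Hence
\(\ell_N(q)\beta=\mathcal V_N^{(q)}(c^+)+t_N-s\), and the first formula reduces to two facts. The first is \(\mathcal V_N^{(q)}(c^+)=q\mathcal V_{N-1}^{(q)}(c^+)+c^+_{N-1}\). The second is the sandwich
\[
 \lfloor t_N-s\rfloor=-c^+_{N-1},
\]
that is, \(t_N-s\) lies in \([0,1)\) when \(c^+_{N-1}=0\) and in \([-1,0)\) when \(c^+_{N-1}=1\). Since \(s,t_N\in[0,1/(q-1)]\subseteq[0,1/2)\) for \(q>2\), the difference \(t_N-s\) lies in \((-1/2,1/2)\). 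So the whole content is the sign of \(t_N-s\).

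The sign is a lexicographic comparison. The shift \(\sigma^N c^+\) is compared with \(c^+\) itself, using the mechanical-word fact that every shift of \(c^+(\theta)\) is lexicographically at least \(c^+(\theta)\) (the lower word is minimal among its rotations). Likewise, every shift of \(c^-(\theta)\) is at most \(c^-(\theta)\). I would prove these directly. From \(\lfloor x\rfloor+\lfloor y\rfloor\leqslant\lfloor x+y\rfloor\), the partial sums satisfy
\[
 \sum_{i<k}c^+_{N+i}=\lfloor (N+k+1)\theta\rfloor-\lfloor (N+1)\theta\rfloor\geqslant\lfloor (k+1)\theta\rfloor-\lfloor\theta\rfloor=\sum_{i<k}c^+_i ,
\]
and domination of prefix sums for 0/1 words gives lexicographic order. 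Because of the mod-\(q\) expansion with \(q>2\) this is not yet enough, so I also need the exact relation. Writing \(c^+=0\,\cdots\) in the region where \(c^+_{N-1}=1\) needs care. Instead, I would use \(c^+_{N-1}=1\) to shift by \(N-1\) and compare \(1\sigma^Nc^+\) with the word \(c^+\), which starts with \(0\) (as \(\theta<1\) gives \(c^+_0=\lfloor2\theta\rfloor-\lfloor\theta\rfloor\)). This needs a small case split according to whether \(c_0^+=0\), that is, \(\theta<1/2\). The symmetric case \(\theta\geqslant1/2\) would be handled by a complementation symmetry \(\theta\mapsto1-\theta\) exchanging \(c^+\) and \(1-c^-\).

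Equality cases (periodicity, \(t_N=s\)) must land on the correct side. Here strictness comes from the distinction \(\beta\) versus \(\alpha\): \(s\) is attained by the exact periodic expansion. The \(c^-\) and \(\alpha\) formulas follow by the same computation, with \(\lceil\cdot\rceil-1\) replacing \(\lfloor\cdot\rfloor\) precisely to absorb the equality case. The \((\ell_N(q)+1)\) formulas follow by adding \(\beta=(q-1)s\), which shifts \(t_N-s\) to \(t_N+(q-2)s\in[0,1)\). Checking this bound, namely \(t_N+(q-2)s<1\), uses \(s<1/(q-1)\) strictly since the word is not all ones.

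The main obstacle I expect is the sandwich inequality: establishing the sign of \(t_N-s\) from the rotation-extremality of mechanical words, with the endpoint and equality cases handled carefully. Finally, the boundary cases \(\beta_0=0\) (with \(c^+(0)=0\cdots\)) and \(\alpha_1=1\) (with \(c^-(1)=1\cdots\)) can be checked directly, since \(\lceil\ell_N\rceil-1=\ell_N-1=q\ell_{N-1}\).
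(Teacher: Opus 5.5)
\textbf{There is a genuine gap: the sign rule that carries the whole lemma is never proved, and the fact you cite for it is false.}

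Your reduction is correct and is the same as the paper's. From $\ell_N(q)\beta_\theta^{(q)}=q\mathcal{V}_{N-1}^{(q)}(c^+)+c^+_{N-1}+t_N-s$, everything reduces to the sign rule $t_N<s\iff c^+_{N-1}=1$, and for $c^-$ to $t_N\leqslant s\iff c^-_{N-1}=1$. Your treatment of the $(\ell_N(q)+1)$ identities and of the boundary words is fine.

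\emph{The cited fact is false.} It is not true that every shift of $c^+(\theta)$ is lexicographically at least $c^+(\theta)$. If it were, you would get $t_N\geqslant s$ for all $N$. That would give $\lfloor\ell_N(q)\beta_\theta^{(q)}\rfloor=\mathcal{V}_N^{(q)}(c^+(\theta))$, contradicting the lemma whenever $c^+_{N-1}=1$. Concretely, for $\theta=1/3$ one has $c^+=(010)^\infty$ and $\sigma^2c^+=(001)^\infty<c^+$; with $q=3$ this is $t_2=1/26<s=3/26$.
\begin{itemize}
\item Your displayed inequality is off by one. Superadditivity gives only $\lfloor(N+k+1)\theta\rfloor-\lfloor(N+1)\theta\rfloor\geqslant\lfloor k\theta\rfloor$. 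The bound by $\lfloor(k+1)\theta\rfloor$ fails at $\theta=1/3$, $N=k=2$, where it reads $0\geqslant1$.
\item What that argument actually proves is minimality of $0c^+(\theta)$, the word with prefix sums $\lfloor k\theta\rfloor$, not of $c^+(\theta)$.
\item Likewise, shifts of $c^-(\theta)$ need not be at most $c^-(\theta)$: for $\theta=1/3$, $c^-=(001)^\infty$ is the smallest rotation.
\end{itemize}

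\emph{Your patch does not close the gap.} Comparing $1\,\sigma^Nc^+=\sigma^{N-1}c^+$ with $c^+$ shows only $\sigma^{N-1}c^+>c^+$. That says nothing about $\sigma^Nc^+$ versus $c^+$. Neither the split at $\theta=1/2$ nor the symmetry $\theta\mapsto1-\theta$ supplies the missing inequality. Lexicographic versus numerical order is also not the issue: for $q>2$, a strict lexicographic inequality between $0/1$ words already gives a strict inequality of base-$q$ values.

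\textbf{The missing idea} is that the tail is itself a mechanical word with a different intercept, and its value is monotone in the intercept.
\begin{itemize}
\item The first $k$ digits of $c^+(\theta)$ sum to $\lfloor\theta+k\theta\rfloor$. Those of $\sigma^Nc^+(\theta)$ sum to $\lfloor\{(N+1)\theta\}+k\theta\rfloor$, where $\{\cdot\}$ is the fractional part.
\item One checks $c^+_{N-1}=1$ exactly when $\{(N+1)\theta\}<\theta$.
\item Prefix-sum domination then gives $\sigma^Nc^+\geqslant c^+$ when $c^+_{N-1}=0$, and $\sigma^Nc^+<c^+$ when $c^+_{N-1}=1$. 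Strictness comes from $k=d-1$, where $\{k\theta\}=1-\theta$.
\item For $c^-(\theta)$ the intercepts are $\theta-1/d$ and $\{(N+1)\theta-1/d\}$. The inequalities become $t_N\leqslant s$ versus $t_N>s$, which is what the ceiling formula needs, including the case $d\mid N$.
\end{itemize}
This is precisely the paper's argument. It shows that the value $T(z)$ of the word with integer intercept $z$ is strictly increasing in $z$ (using $q>2$). It identifies $c^+,c^-$ with $z=c,\,c-1$, and their $N$-fold tails with $z=[(N+1)c]_d,\,[(N+1)c-1]_d$. It then reads off the digit $c^\pm_{N-1}$ from whether the new intercept lies below the old one.
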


\begin{proof}
Write \(\theta=c/d\) in lowest terms and, for \(0\leqslant z<d\), set
\[
 d_i(z)=\left\lfloor\frac{z+(i+1)c}{d}\right\rfloor-
 \left\lfloor\frac{z+ic}{d}\right\rfloor,
 \qquad T(z)=\sum_{i\geqslant0}d_i(z)q^{-i-1}.
\]
The first \(j\) digits sum to \(\lfloor(z+jc)/d\rfloor\), a nondecreasing
function of \(z\).  If \(z_1<z_2\), choose \(j\) with \(jc\equiv d-z_2\pmod d\); then the two
partial sums differ, and at their first differing digit the smaller
parameter gives \(0\) and the larger \(1\).  Since \(q>2\), later digits
cannot reverse this inequality, so \(T(z)\) is strictly increasing.

The words \(c^+(\theta)\), \(c^-(\theta)\) correspond to initial parameters \(c\), \(c-1\).
If \(t\) is the associated base-\(q\) fraction, \(t_N\) its tail after \(N\)
digits, and \(D\) the \(N\)-th digit, then
\(q^Nt=\mathcal{V}_N^{(q)}(c^\pm(\theta))+t_N\) and \(\mathcal{V}_N^{(q)}(c^\pm(\theta))=q\mathcal{V}_{N-1}^{(q)}(c^\pm(\theta))+D\).  For \(c^+(\theta)\), the new
parameter is \([(N+1)c]_d\); hence \(D=1\) iff it is below \(c\), so
monotonicity of \(T\) gives \(D=1\Rightarrow t_N<t\) and
\(D=0\Rightarrow t_N\geqslant t\).  Therefore
\(\lfloor(q^N-1)t\rfloor=q\mathcal{V}_{N-1}^{(q)}(c^+(\theta))\).  For \(c^-(\theta)\), the new parameter is
\([(N+1)c-1]_d\); similarly \(D=1\Rightarrow t_N\leqslant t\) and
\(D=0\Rightarrow t_N>t\), giving
\(\lceil(q^N-1)t\rceil-1=q\mathcal{V}_{N-1}^{(q)}(c^-(\theta))\).  The weak inequalities include the
case \(d\mid N\).

Finally,
\[
 (\ell_N(q)+1)(q-1)t=\mathcal{V}_N^{(q)}(c^\pm(\theta))+t_N+(q-2)t.
\]
Every shifted period contains both digits, so
\(0<t,t_N<1/(q-1)\) and hence \(0<t_N+(q-2)t<1\).  Taking the floor, or
the ceiling minus one, gives the last two identities in
\eqref{eq:four-endpoints}; the outer endpoints follow from
\(\ell_N(q)-1=q\ell_{N-1}(q)\).
\end{proof}

We next record the Farey-neighbor facts that control the passage from \(F_N\) to \(F_{N+1}\).  For
fractions \(c_1/d_1<c_2/d_2\), write their mediant as
\((c_1+c_2)/(d_1+d_2)\).

\begin{lemma}
\label{lem:farey}
Suppose \(c_1/d_1<c_2/d_2\) are adjacent in \(F_N\), with
\(\gcd(c_1,d_1)=\gcd(c_2,d_2)=1\).  Then
\[
 d_1c_2-c_1d_2=1,
 \qquad D=d_1+d_2>N.
\]
For every integer \(k\) with \(1\leqslant k<D\),
\begin{equation}
 \left\lfloor\frac{kc_1}{d_1}\right\rfloor
 =\left\lceil\frac{kc_2}{d_2}\right\rceil-1.
 \label{eq:farey-floor}
\end{equation}
At \(k=D\), the corresponding values are
\[
 \left\lfloor\frac{Dc_1}{d_1}\right\rfloor=c_1+c_2-1,
 \qquad
 \left\lceil\frac{Dc_2}{d_2}\right\rceil=c_1+c_2+1.
\]
The sequence \(F_{N+1}\) is obtained from \(F_N\) by inserting
the mediant precisely for adjacent pairs with \(d_1+d_2=N+1\).
\end{lemma}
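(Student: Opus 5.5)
The plan is to prove the four assertions of Lemma~\ref{lem:farey} in the order stated, using only the standard Farey-neighbor facts together with elementary floor/ceiling manipulations.

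\emph{Unimodularity and $D>N$.} The relation $d_1c_2-c_1d_2=1$ is the classical determinant identity for neighbors in $F_N$. I will prove it by induction on $N$ at the same time as the insertion statement. In $F_1=(0/1,1/1)$ the determinant is $1$. Suppose the claim holds for $F_N$, and let $c_1/d_1<c_2/d_2$ be adjacent there. Every fraction strictly between them has the form
\[
\frac{xc_1+yc_2}{xd_1+yd_2},\qquad x,y\geqslant1,
\]
because solving for $x,y$ inverts a unimodular matrix. Such a fraction therefore has denominator at least $d_1+d_2$. It follows that $D>N$, since otherwise the mediant would already lie in $F_N$ between two adjacent terms. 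It also follows that the only fraction with denominator $N+1$ that can appear between them is the mediant, and it appears exactly when $d_1+d_2=N+1$. The mediant is in lowest terms, and it has determinant $1$ with each parent, so the induction closes.

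\emph{The floor identity \eqref{eq:farey-floor}.} For $1\leqslant k<D$, I will show that no integer lies in the half-open interval $(kc_1/d_1,\,kc_2/d_2)$ other than possibly at the right endpoint $kc_2/d_2$, and that no integer equals $kc_1/d_1$ unless it also equals $\lceil kc_2/d_2\rceil-1$. Equivalently, I want $\lfloor kc_1/d_1\rfloor+1=\lceil kc_2/d_2\rceil$. Suppose an integer $m$ satisfied $kc_1/d_1<m<kc_2/d_2$. Then $c_1/d_1<m/k<c_2/d_2$. By the first step, writing $m/k$ in lowest terms gives a denominator at least $D$, which contradicts $k<D$.

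The boundary cases remain. If $kc_1/d_1=m$ is an integer, then $d_1\mid k$. We need $m<kc_2/d_2\leqslant m+1$, which is the same as $c_2/d_2\leqslant (m+1)/k$. Since $c_2/d_2-c_1/d_1=1/(d_1d_2)$ and $(m+1)/k-c_1/d_1=1/k$, this reduces to $k\leqslant d_1d_2$, which holds because $k<D\leqslant d_1d_2+1$. The case where $kc_2/d_2$ is an integer is symmetric. Outside these cases the interval $[kc_1/d_1,kc_2/d_2]$ contains no integer, and the identity is immediate.

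\emph{The values at $k=D$.} Here I compute directly. We have $Dc_1=d_1(c_1+c_2)-1$, using $d_1c_2=c_1d_2+1$, so
\[
\frac{Dc_1}{d_1}=c_1+c_2-\frac{1}{d_1},
\]
and its floor is $c_1+c_2-1$. Similarly $Dc_2=d_2(c_1+c_2)+1$, so $\lceil Dc_2/d_2\rceil=c_1+c_2+1$.

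The only delicate point is the boundary analysis in the floor identity, namely when one endpoint is an integer. The inequality $k\leqslant d_1d_2$ disposes of it.
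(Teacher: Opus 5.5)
Your proof is correct and follows the paper's route: the floor identity comes from the observation that an integer strictly between $kc_1/d_1$ and $kc_2/d_2$ would give an intermediate fraction of denominator less than $D$, and the values at $k=D$ are the same direct computation. The differences are minor: you derive the Farey-neighbor facts by induction from the unimodular representation $(xc_1+yc_2)/(xd_1+yd_2)$ with $x,y\geqslant1$, where the paper simply cites Apostol. Your separate boundary analysis via $k\leqslant d_1d_2$ is also redundant, because if $kc_1/d_1=m$ and $kc_2/d_2>m+1$, then $m+1$ would already lie in the open interval you excluded.
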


\begin{proof}
For the standard Farey sequences \(F_N\) used here, consecutive fractions satisfy
\(d_1c_2-c_1d_2=1\); every intermediate reduced fraction has denominator at
least \(D=d_1+d_2\), while the mediant has denominator exactly \(D\).
Moreover, \(F_{N+1}\) inserts that mediant precisely when \(D=N+1\); see
\cite[Theorems~5.3--5.5]{Apostol1990}.  In particular, \(D>N\).
If \(1\leqslant k<D\), the open interval
\((kc_1/d_1,kc_2/d_2)\) contains no integer: if an integer \(t\) lay
in this interval, then after reduction \(t/k\) would give an intermediate
fraction of denominator less than \(D\).  This proves
\eqref{eq:farey-floor}.  Finally,
\[
 \frac{Dc_1}{d_1}=c_1+c_2-\frac1{d_1},\qquad
 \frac{Dc_2}{d_2}=c_1+c_2+\frac1{d_2},
\]
which gives the endpoint assertions.
\end{proof}

Combining the endpoint identities with Farey adjacency gives the intervals used
in the induction below.  For an adjacent pair \(\theta_-<\theta_+\) in \(F_N\), let
\[
 K_N^{(q)}(\theta_-,\theta_+):=[\beta_{\theta_-}^{(q)},\alpha_{\theta_+}^{(q)}).
\]
\begin{lemma}
\label{lem:intervals}
Let \(\theta_-=c_1/d_1<\theta_+=c_2/d_2\) be adjacent in \(F_N\), with
\(\gcd(c_1,d_1)=\gcd(c_2,d_2)=1\), and put \(D=d_1+d_2\).  The interval \(K_N^{(q)}(\theta_-,\theta_+)\) is nonempty.  On it,
\(\lfloor\ell_N(q)y\rfloor\) is constant, say equal to \(n\).
If \(D>N+1\), then \(\lfloor(\ell_N(q)+1)y\rfloor\) is also constant.
If \(D=N+1\), it takes exactly the two values \(n,n+1\), with its
unique change at \(\mu_{\theta_0}^{(q)}\), where \(\theta_0=(c_1+c_2)/(d_1+d_2)\).
In this case,
\[
 B_{\theta_0}^{(q)}=qn+1,
 \qquad
 \mu_{\theta_0}^{(q)}=\frac{n+1}{\ell_N(q)+1},
 \qquad
 \beta_{\theta_-}^{(q)}<\mu_{\theta_0}^{(q)}<\alpha_{\theta_+}^{(q)}.
\]
\end{lemma}

\begin{proof}
By \eqref{eq:farey-floor}, \(c^+(\theta_-)\) and \(c^-(\theta_+)\) agree
in their first \(D-2\) digits, while the next digits are \(0\) and \(1\) by
Lemma~\ref{lem:farey}.  A first differing binary digit dominates the tail because \(q>2\), so
\(\beta_{\theta_-}^{(q)}<\alpha_{\theta_+}^{(q)}\).  Since \(D>N\), the first
\(N-1\) digits agree, and \eqref{eq:four-endpoints} gives
\[
 \lfloor\ell_N(q)\beta_{\theta_-}^{(q)}\rfloor
 =\lceil\ell_N(q)\alpha_{\theta_+}^{(q)}\rceil-1
 =q\mathcal{V}_{N-1}^{(q)}(c^+(\theta_-)),
\]
so \(\lfloor\ell_N(q)y\rfloor\) is constant on \(K_N^{(q)}\).

If \(D>N+1\), the first \(N\) digits also agree, so
\(\lfloor(\ell_N(q)+1)y\rfloor\) is constant.  If \(D=N+1\), its endpoint
values are \(n,n+1\).  For the mediant \(\theta_0\), the first
\(N-1\) digits of \(c^-(\theta_0)\) are the common ones and its last two
period digits are \(01\), hence \(B_{\theta_0}^{(q)}=qn+1\).  From
\eqref{eq:endpoints} and \(\ell_{N+1}(q)+q-1=q(\ell_N(q)+1)\),
\(\mu_{\theta_0}^{(q)}=(n+1)/(\ell_N(q)+1)\); the endpoint values place this point
strictly inside \(K_N^{(q)}\) and exclude any third quotient value.
\end{proof}

For the induction below, we define a family of left-closed, right-open intervals
\(J_N^{(q)}(\theta_-,\theta_+)\).  Set \(J_1^{(q)}(0,1):=[0,1)\).  For
\(N\geqslant2\) and adjacent \(\theta_-<\theta_+\) in \(F_N\), let
\(J_N^{(q)}(\theta_-,\theta_+)\) be the left-closed, right-open interval whose
left endpoint is \(0\) if \(\theta_-=0\), \(\mu_{\theta_-}^{(q)}\) if
\(\theta_-\in(0,1)\) has reduced denominator \(N\), and
\(\beta_{\theta_-}^{(q)}\) otherwise, and whose right endpoint is \(1\) if
\(\theta_+=1\), \(\mu_{\theta_+}^{(q)}\) if \(\theta_+\in(0,1)\) has reduced
denominator \(N\), and \(\alpha_{\theta_+}^{(q)}\) otherwise.  By
Lemma~\ref{lem:intervals},
\[
 K_N^{(q)}(\theta_-,\theta_+)
 =[\beta_{\theta_-}^{(q)},\alpha_{\theta_+}^{(q)})
 \subseteq J_N^{(q)}(\theta_-,\theta_+).
\]

\begin{corollary}[One-step interval decomposition]
\label{cor:one-step-intervals}
Let \(\theta_-=c_1/d_1<\theta_+=c_2/d_2\) be adjacent in \(F_N\), with
\(\gcd(c_1,d_1)=\gcd(c_2,d_2)=1\), and set
\(\theta_0=(c_1+c_2)/(d_1+d_2)\).  Then
\((F_{N+1}\setminus F_N)\cap(\theta_-,\theta_+)\) contains at most one point.
If \(d_1+d_2>N+1\), this set is empty and
\[
 K_N^{(q)}(\theta_-,\theta_+)=J_{N+1}^{(q)}(\theta_-,\theta_+).
\]
If \(d_1+d_2=N+1\), then
\((F_{N+1}\setminus F_N)\cap(\theta_-,\theta_+)=\{\theta_0\}\), and
\[
 K_N^{(q)}(\theta_-,\theta_+)
 =J_{N+1}^{(q)}(\theta_-,\theta_0)
 \sqcup J_{N+1}^{(q)}(\theta_0,\theta_+).
\]
\end{corollary}

\begin{proof}
By Lemma~\ref{lem:farey}, one has \(d_1+d_2>N\), and a new point of
\(F_{N+1}\) can lie between \(\theta_-\) and \(\theta_+\) only when
\(d_1+d_2=N+1\); in that case it is their mediant \(\theta_0\).
If \(d_1+d_2>N+1\), the two fractions remain adjacent in \(F_{N+1}\), and the
definition of \(J_{N+1}^{(q)}\) gives the first equality.  If
\(d_1+d_2=N+1\), Lemma~\ref{lem:intervals} gives
\(\beta_{\theta_-}^{(q)}<\mu_{\theta_0}^{(q)}<\alpha_{\theta_+}^{(q)}\), while
the definition of \(J_{N+1}^{(q)}\) gives
\[
 J_{N+1}^{(q)}(\theta_-,\theta_0)
 =[\beta_{\theta_-}^{(q)},\mu_{\theta_0}^{(q)}),\qquad
 J_{N+1}^{(q)}(\theta_0,\theta_+)
 =[\mu_{\theta_0}^{(q)},\alpha_{\theta_+}^{(q)}).
\]
Their disjoint union is \(K_N^{(q)}(\theta_-,\theta_+)\).
\end{proof}

\section{Forcing the next digit}
\label{sec:selector}

The first stage of the necessity proof has supplied the initial congruence
\(\rho\equiv1\pmod q\).  We now carry out the one-step argument in the second
stage: at a fixed Farey level \(N\), we localize \(\rho/\ell_e(q)\) to an interval on
which the floor data are stable and then force the next base-\(q\) digit.  We work in
the finite-field setting of Section~\ref{sec:coefficients}, with \(q=p^m>2\), and
combine the Farey data from Section~\ref{sec:endpoints} with the coefficient
conditions.  We retain the superscript \((q)\) on the base-\(q\) quantities introduced
there; the Farey and mechanical-word data remain unadorned.  For \(N\geqslant1\)
and \(0\leqslant\xi<1\), let
\(\theta_N^-(\xi)<\theta_N^+(\xi)\) be the unique adjacent terms of
\(F_N\) satisfying \(\theta_N^-(\xi)\leqslant\xi<\theta_N^+(\xi)\).
Define
\begin{equation}
\begin{aligned}
 \varepsilon_N(\xi)&:=\lfloor N\xi\rfloor-\lfloor(N-1)\xi\rfloor,\\
 \sigma_N^{(q)}(\xi)&:=1+\sum_{k=2}^{N-1}
  \bigl(\lfloor k\xi\rfloor-\lfloor(k-1)\xi\rfloor\bigr)q^k
  =1+\sum_{k=2}^{N-1}\varepsilon_k(\xi)q^k,\\
 n_N^{(q)}(\xi)&:=q\mathcal{V}_{N-1}^{(q)}(c^+(\xi)).
\end{aligned}
\label{eq:labels}
\end{equation}
For \(N=1\) the sum is empty, \(\mathcal{V}_0^{(q)}(c^+(\xi))=0\), and \(\lfloor\xi\rfloor=0\), so
\(\sigma_1^{(q)}(\xi)=1\) and \(n_1^{(q)}(\xi)=\varepsilon_1(\xi)=0\).

\begin{lemma}
\label{lem:labels}
For each \(N\geqslant1\), the functions
\(\varepsilon_N(\cdot)\), \(\sigma_N^{(q)}(\cdot)\), and \(n_N^{(q)}(\cdot)\) are constant
on every half-open interval \([u,v)\) with adjacent \(u<v\) in \(F_N\).  They satisfy
\[
 \begin{gathered}
 \varepsilon_N(\xi)\in\{0,1\},\quad
 1\leqslant\sigma_N^{(q)}(\xi)\leqslant\ell_N(q),\quad
 0\leqslant n_N^{(q)}(\xi)\leqslant\ell_N(q)-1,\\
 \sigma_N^{(q)}(\xi)\equiv1\pmod q,\quad n_N^{(q)}(\xi)\equiv0\pmod q,\\
 n_N^{(q)}(\xi)-\sigma_N^{(q)}(\xi)\equiv\varepsilon_N(\xi)-1\pmod{q-1}.
 \end{gathered}
\]
For
\(y\in K_N^{(q)}(\theta_N^-(\xi),\theta_N^+(\xi))\),
\[
 \lfloor\ell_N(q)y\rfloor=n_N^{(q)}(\xi),\qquad
 \lfloor(\ell_N(q)+1)y\rfloor\in\{n_N^{(q)}(\xi),n_N^{(q)}(\xi)+1\}.
\]
Moreover, for \(N\geqslant1\),
\(\sigma_{N+1}^{(q)}(\xi)=\sigma_N^{(q)}(\xi)+\varepsilon_N(\xi)q^N\), and, for
\(N\geqslant2\),
\begin{align}
 n_N^{(q)}(\xi)-qn_{N-1}^{(q)}(\xi)&=q\varepsilon_N(\xi),\notag\\
 \sigma_N^{(q)}(\xi)&\equiv1\pmod{q^2},\qquad
 n_N^{(q)}(\xi)\equiv q\varepsilon_N(\xi)\pmod{q^2}.
 \label{eq:strong-state}
\end{align}
\end{lemma}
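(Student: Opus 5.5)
The proof is a sequence of direct verifications from the definitions in \eqref{eq:labels}. The one input that is more than bookkeeping is that the digits of \(c^+(\xi)\) are exactly the successive increments \(\varepsilon_k(\xi)\). Indeed, by definition \(c_i^+(\xi)=\lfloor(i+2)\xi\rfloor-\lfloor(i+1)\xi\rfloor=\varepsilon_{i+2}(\xi)\). Hence
\[
 \mathcal V_{N-1}^{(q)}(c^+(\xi))=\sum_{i=0}^{N-2}\varepsilon_{i+2}(\xi)q^{N-2-i},
 \qquad
 n_N^{(q)}(\xi)=\sum_{k=2}^{N}\varepsilon_k(\xi)q^{N+1-k}.
\]

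\textbf{Constancy.} I would first show that each \(\lfloor k\xi\rfloor\) with \(k\leqslant N\) is constant on \([u,v)\) for adjacent \(u<v\) in \(F_N\). The jumps of \(\lfloor k\xi\rfloor\) occur at points \(j/k\), which lie in \(F_N\). Every such point is therefore an endpoint of a Farey cell, and it is never interior to \([u,v)\). Right-continuity of the floor function handles the left endpoint. Constancy of \(\varepsilon_N\), \(\sigma_N^{(q)}\) and \(n_N^{(q)}\) follows, since each is built only from \(\lfloor k\xi\rfloor\) with \(k\leqslant N\).

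\textbf{Bounds and congruences.} Since \(0\leqslant\xi<1\), each \(\varepsilon_k(\xi)\) lies in \(\{0,1\}\). This gives
\[
 1\leqslant\sigma_N^{(q)}(\xi)\leqslant 1+\sum_{k=2}^{N-1}q^k\leqslant\ell_N(q),
\]
and
\[
 0\leqslant n_N^{(q)}(\xi)\leqslant q\,\ell_{N-1}(q)=\ell_N(q)-1.
\]
Both quantities are visibly \(\equiv1\) and \(\equiv0\pmod q\), respectively.

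Reducing modulo \(q-1\) (so \(q\equiv1\)) gives
\[
 n_N^{(q)}(\xi)\equiv\sum_{k=2}^{N}\varepsilon_k(\xi),\qquad
 \sigma_N^{(q)}(\xi)\equiv1+\sum_{k=2}^{N-1}\varepsilon_k(\xi).
\]
Their difference is \(\varepsilon_N(\xi)-1\), as claimed.

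The recursion \(\sigma_{N+1}^{(q)}=\sigma_N^{(q)}+\varepsilon_Nq^N\) holds by definition for \(N\geqslant2\). For \(N=1\) it holds because \(\varepsilon_1(\xi)=\lfloor\xi\rfloor=0\), so \(\sigma_2^{(q)}=1=\sigma_1^{(q)}\). For \(N\geqslant2\), the formula for \(n_N^{(q)}\) above gives \(n_N^{(q)}-qn_{N-1}^{(q)}=q\varepsilon_N\) by shifting the sum. From this, \(n_N^{(q)}\equiv q\varepsilon_N\pmod{q^2}\). Likewise \(\sigma_N^{(q)}\equiv1\pmod{q^2}\), since every term of its sum has \(k\geqslant2\).

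\textbf{Floor identities on \(K_N^{(q)}\).} Put \(\theta_-=\theta_N^-(\xi)\). The word \(c^+(\cdot)\) is constant on the cell \([\theta_-,\theta_+)\) in its first \(N-1\) digits, by the constancy step. Combined with \eqref{eq:four-endpoints} at \(\theta=\theta_-\), this gives
\[
 \lfloor\ell_N(q)\beta_{\theta_-}^{(q)}\rfloor=q\mathcal V_{N-1}^{(q)}(c^+(\theta_-))=n_N^{(q)}(\xi).
\]
The boundary cases \(\theta_-=0\) and \(\theta_+=1\) use the stated conventions \(\beta_0^{(q)}=0\) and \(\alpha_1^{(q)}=1\). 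Lemma~\ref{lem:intervals} says that \(\lfloor\ell_N(q)y\rfloor\) is constant on \(K_N^{(q)}\), so it equals \(n_N^{(q)}(\xi)\) there. The same lemma shows that \(\lfloor(\ell_N(q)+1)y\rfloor\) takes values in \(\{n,n+1\}\) on \(K_N^{(q)}\).

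The only delicate point is checking that the intervals of constancy agree exactly with the Farey cells, including the endpoint conventions at \(0\) and \(1\). Everything else is routine bookkeeping.
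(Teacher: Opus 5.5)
Your proposal is correct and follows essentially the same argument as the paper. Constancy holds because every jump point $j/k$ of $\lfloor k\xi\rfloor$ with $k\leqslant N$ lies in $F_N$. The bounds and congruences are read off from \eqref{eq:labels}; your identification $c_i^+(\xi)=\varepsilon_{i+2}(\xi)$, giving $n_N^{(q)}(\xi)=\sum_{k=2}^{N}\varepsilon_k(\xi)q^{N+1-k}$, is the paper's telescoping written out explicitly. The quotient assertions come from Lemma~\ref{lem:intervals} evaluated at the left endpoint $\beta_{\theta_-}^{(q)}$, with the conventions at $0$ and $1$.

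One step could be stated more carefully. When $d_1+d_2>N+1$, Lemma~\ref{lem:intervals} only gives constancy of $\lfloor(\ell_N(q)+1)y\rfloor$, not its value. That value lies in $\{n,n+1\}$ trivially, because $0\leqslant y<1$ implies $\lfloor(\ell_N(q)+1)y\rfloor-\lfloor\ell_N(q)y\rfloor\in\{0,1\}$.
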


\begin{proof}
Put \(u=\theta_N^-(\xi)\) and \(v=\theta_N^+(\xi)\).  For
\(k\leqslant N\), the right-continuous function
\(t\mapsto\lfloor kt\rfloor\) can change only at fractions of denominator
at most \(N\), hence is constant on \([u,v)\).  Therefore all floor
differences entering \eqref{eq:labels}, including the first \(N-1\) digits of
\(c^+(\xi)\), are constant there, proving the first assertion.  The bounds and
congruences follow from
\eqref{eq:labels} and \(q\ell_{N-1}(q)=\ell_N(q)-1\); modulo \(q-1\),
telescoping gives
\[
 n_N^{(q)}(\xi)\equiv\lfloor N\xi\rfloor,\qquad
 \sigma_N^{(q)}(\xi)\equiv1+\lfloor(N-1)\xi\rfloor.
\]
The quotient assertions follow from Lemma~\ref{lem:intervals}, since
constancy gives \(n_N^{(q)}(\xi)=n_N^{(q)}(u)\), while
\(n_N^{(q)}(u)=q\mathcal{V}_{N-1}^{(q)}(c^+(u))\).  The recurrence for \(\sigma_{N+1}^{(q)}(\xi)\) follows directly from
\eqref{eq:labels} (also for \(N=1\), since \(\varepsilon_1(\xi)=0\)).
For \(N\geqslant2\),
\[
 \mathcal{V}_{N-1}^{(q)}(c^+(\xi))=q\mathcal{V}_{N-2}^{(q)}(c^+(\xi))+\varepsilon_N(\xi),
\]
which gives the recurrence for \(n_N^{(q)}(\xi)\).
Finally, for \(N\geqslant2\), the sum defining \(\sigma_N^{(q)}(\xi)\) begins
at \(q^2\), while the recurrence for \(n_N^{(q)}(\xi)\), together with
\(n_{N-1}^{(q)}(\xi)\equiv0\pmod q\), gives \eqref{eq:strong-state}.
\end{proof}

\begin{lemma}[Farey-state localization]
\label{lem:farey-localization}
Let \(1\leqslant N<e\), let \(\theta_-<\theta_+\) be adjacent in \(F_N\), and suppose that
\(X^r(X^{q-1}+a)\) permutes \(\mathbb F_{q^e}\), with
\(\rho=[r]_{\ell_e(q)}\).  If
\begin{equation}
 \frac{\rho}{\ell_e(q)}\in J_N^{(q)}(\theta_-,\theta_+),\qquad
 \rho\equiv\sigma_N^{(q)}(\theta_-)\pmod{q^N},
 \label{eq:farey-condition}
\end{equation}
then
\[
 \frac{\rho}{\ell_e(q)}\in K_N^{(q)}(\theta_-,\theta_+),\qquad
 \left\lfloor\frac{\ell_N(q)\rho}{\ell_e(q)}\right\rfloor
 =n_N^{(q)}(\theta_-).
\]
\end{lemma}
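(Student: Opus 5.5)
The plan is to compare the two intervals directly. By definition, \(J_N^{(q)}(\theta_-,\theta_+)\) and \(K_N^{(q)}(\theta_-,\theta_+)=[\beta_{\theta_-}^{(q)},\alpha_{\theta_+}^{(q)})\) can differ only at an endpoint \(\theta_\pm\in(0,1)\) whose reduced denominator is exactly \(N\). There \(J_N\) uses \(\mu_{\theta_\pm}^{(q)}\) in place of \(\beta_{\theta_-}^{(q)}\) or \(\alpha_{\theta_+}^{(q)}\). By Corollary~\ref{cor:one-step-intervals} (applied at level \(N-1\)) and Lemma~\ref{lem:intervals}, \(J_N\) is therefore \(K_N\) together with at most two boundary strips:
\[
 S_-=[\mu_{\theta_-}^{(q)},\beta_{\theta_-}^{(q)}),\qquad S_+=[\alpha_{\theta_+}^{(q)},\mu_{\theta_+}^{(q)}).
\]
The second conclusion is already settled on \(K_N\): Lemma~\ref{lem:labels} gives \(\lfloor\ell_N(q)y\rfloor=n_N^{(q)}(\theta_-)\) for every \(y\in K_N^{(q)}(\theta_-,\theta_+)\). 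So everything reduces to showing that \(\rho/\ell_e(q)\) cannot lie in \(S_-\) or \(S_+\).

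The obstruction will be Lemma~\ref{lem:pure} at level \(N\). Put \(n=\lfloor\ell_N(q)\rho/\ell_e(q)\rfloor\) and \(\delta=[n-\rho]_{q^N}\). Since \(f\) permutes, Lemma~\ref{lem:power-sum} rules out a nonzero monomial, so Lemma~\ref{lem:pure} forces \(\delta\geqslant q-1\). The hypothesis \(\rho\equiv\sigma_N^{(q)}(\theta_-)\pmod{q^N}\) lets me replace \(\rho\) by the explicit label, so that \(\delta=[n-\sigma_N^{(q)}(\theta_-)]_{q^N}\). As a consistency check on \(K_N\), where \(n=n_N^{(q)}(\theta_-)\): the congruences \(n_N\equiv0\) and \(\sigma_N\equiv1\pmod q\) from Lemma~\ref{lem:labels} give \(\delta\equiv q-1\pmod q\), so there is no contradiction there, as expected. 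On each strip I must instead show that \(n\) takes a shifted value \(n^*\) with \([n^*-\sigma_N^{(q)}(\theta_-)]_{q^N}\leqslant q-2\). That makes \(P_{\rho,\ell_N(q)}\) a nonzero monomial, contradicting Lemma~\ref{lem:power-sum}.

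To identify \(n^*\), I will use the floor identities \eqref{eq:four-endpoints} for the endpoint of denominator \(N\), together with \(\ell_N(q)=q\ell_{N-1}(q)+1\).

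On \(S_-\), the point \(\mu_{\theta_-}^{(q)}=(n'+1)/(\ell_{N-1}(q)+1)\) comes from Lemma~\ref{lem:intervals} at level \(N-1\). Just below \(\beta_{\theta_-}^{(q)}\), the floor \(\lfloor\ell_N(q)y\rfloor\) should drop to \(n_N^{(q)}(\theta_-)-1\). I then check that it stays constant on all of \(S_-\) by evaluating at \(\mu_{\theta_-}^{(q)}\). This gives \(\delta=[n_N-1-\sigma_N]_{q^N}\), and I must verify that this equals \(q-2\). Equivalently, \(n_N^{(q)}(\theta_-)-\sigma_N^{(q)}(\theta_-)\equiv q-1\pmod{q^N}\). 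This should follow because the denominator-\(N\) endpoint makes the first \(N-1\) digits of \(c^+(\theta_-)\), read in reverse, match the digits \(\varepsilon_2,\ldots,\varepsilon_{N-1}\) in \(\sigma_N\): the mechanical word of a fraction of denominator \(N\) is a palindrome apart from its last two letters.

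On \(S_+\), symmetrically, the floor should be \(n_N^{(q)}(\theta_-)+1\). Then \(\delta\equiv0\pmod q\), and the same palindromic digit comparison should give \(\delta=0\).

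The main obstacle is exactly this digit bookkeeping: verifying, at a fraction of reduced denominator \(N\), that \(n_N^{(q)}\) and \(\sigma_N^{(q)}\) are related by the reversal/palindrome property, so that \(\delta\) lands in \(\{0,\dots,q-2\}\). I will first try to derive it from the symmetry \(\lfloor k c/N\rfloor+\lfloor (N-k)c/N\rfloor=c-1\) for \(0<k<N\), together with the recurrences \eqref{eq:strong-state}. Establishing that the floor is constant across each strip is the secondary concern; for that I will use the monotonicity argument from the proof of \eqref{eq:four-endpoints}.
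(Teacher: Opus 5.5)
Your overall plan is the paper's plan. The strips $S_\pm$ at an endpoint $\vartheta$ of reduced denominator $N$ are the only part of $J_N^{(q)}\setminus K_N^{(q)}$. The obstruction is Lemma~\ref{lem:pure} at level $N$, and the key input is the palindromic identity $\sigma_N^{(q)}(\theta_-)=B_\vartheta^{(q)}$. Your consistency check on $K_N^{(q)}$, namely $n_N^{(q)}-\sigma_N^{(q)}\equiv-1\pmod q$, is exactly the paper's final step.

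The step you call secondary, however, is false: $\lfloor\ell_N(q)y\rfloor$ is not constant on the strips. So you cannot pin $\delta$ to $q-2$ on $S_-$ or to $0$ on $S_+$.
\begin{itemize}
\item Take $N=2$, $q=5$ and the pair $1/2<1$ in $F_2$. Then $B_{1/2}^{(5)}=1$, $\mu_{1/2}^{(5)}=1/2$ and $\beta_{1/2}^{(5)}=5/6$, so $S_-=[1/2,5/6)$. On this strip $\lfloor 6y\rfloor$ takes both values $3$ and $4$, while $n_2^{(5)}(1/2)-1=4$.
\item For the pair $0<1/2$ one gets $S_+=[1/6,1/2)$, where $\lfloor 6y\rfloor\in\{1,2\}$, while $n_2^{(5)}(0)+1=1$.
\end{itemize}
In general $\ell_N(q)\bigl(\beta_\vartheta^{(q)}-\mu_\vartheta^{(q)}\bigr)=(q-1)(B_\vartheta^{(q)}+q-1)/(\ell_N(q)+q-1)$, which can exceed $1$. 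Evaluating at $\mu_\vartheta^{(q)}$ will therefore not return $n_N^{(q)}-1$, and no monotonicity argument can rescue the claim. Your identities $n_N^{(q)}-\sigma_N^{(q)}=q-1$ (left endpoint) and $\sigma_N^{(q)}=n_N^{(q)}+1$ (right endpoint) are correct, but they only describe the floor at one end of each strip.

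The repair is to aim for a window of floor values rather than a single value, which is what the paper does. Since $\rho\equiv\sigma_N^{(q)}(\theta_-)\pmod{q^N}$, you have $\delta=[\lfloor\ell_N(q)y\rfloor-\sigma_N^{(q)}(\theta_-)]_{q^N}$ with $y=\rho/\ell_e(q)$. Hence $\delta\leqslant q-2$, and Lemma~\ref{lem:power-sum} is contradicted, whenever $\sigma_N^{(q)}\leqslant\lfloor\ell_N(q)y\rfloor\leqslant\sigma_N^{(q)}+q-2$. Using $\sigma_N^{(q)}(\theta_-)=B_\vartheta^{(q)}$ and $d=N$, this window is exactly $y\in[\alpha_\vartheta^{(q)},\beta_\vartheta^{(q)})$.

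The inequalities $\alpha_\vartheta^{(q)}<\mu_\vartheta^{(q)}<\beta_\vartheta^{(q)}$, already established after \eqref{eq:endpoints}, then place both $S_-=[\mu_\vartheta^{(q)},\beta_\vartheta^{(q)})$ and $S_+=[\alpha_\vartheta^{(q)},\mu_\vartheta^{(q)})$ inside this window. No constancy and no separate evaluation at $\mu_\vartheta^{(q)}$ are needed.
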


\begin{proof}
Put \(u=\left\lfloor\ell_N(q)\rho/\ell_e(q)\right\rfloor\).
Lemma~\ref{lem:pure} gives \(1\leqslant\rho<\ell_e(q)\).  Since
\(-\ell_N(q)\leqslant u-\sigma_N^{(q)}(\theta_-)\leqslant\ell_N(q)-2\) and
\(q^N-\ell_N(q)=(q-2)\ell_N(q)+1\geqslant q-1\), the congruence condition in
Lemma~\ref{lem:pure} is equivalent here to
\(\sigma_N^{(q)}(\theta_-)\leqslant u\leqslant\sigma_N^{(q)}(\theta_-)+q-2\).
Thus the portion excluded by Lemma~\ref{lem:pure} is
\[
 \left[
 \frac{\sigma_N^{(q)}(\theta_-)}{\ell_N(q)},
 \frac{\sigma_N^{(q)}(\theta_-)+q-1}{\ell_N(q)}
 \right)\cap J_N^{(q)}(\theta_-,\theta_+).
\]
If an endpoint \(\vartheta\in(0,1)\) has reduced denominator \(N\), then
\(k\vartheta\notin\mathbb Z\) for \(1\leqslant k<N\).  Thus the floor
differences defining \(\sigma_N^{(q)}\) do not jump at \(\vartheta\), and
\(\sigma_N^{(q)}(\theta_-)=B_\vartheta^{(q)}\).  Indeed, the \(q^0\)- and \(q^1\)-digits
of \(\sigma_N^{(q)}(\theta_-)\) are \(1\) and \(0\), while for \(2\leqslant k<N\) the identity
\[
 \lceil(N+1-k)\vartheta\rceil-\lceil(N-k)\vartheta\rceil
 =\lfloor k\vartheta\rfloor-\lfloor(k-1)\vartheta\rfloor
\]
follows from \(N\vartheta\in\mathbb Z\).  Hence the interval excluded by
Lemma~\ref{lem:pure} is \([\alpha_\vartheta^{(q)},\beta_\vartheta^{(q)})\).  If
\(\vartheta\) is the right endpoint, its intersection with \(J_N^{(q)}\) is
\([\alpha_\vartheta^{(q)},\mu_\vartheta^{(q)})\); if it is the left endpoint, the
intersection is \([\mu_\vartheta^{(q)},\beta_\vartheta^{(q)})\).  Thus the
power-sum condition removes exactly the part of
\(J_N^{(q)}\setminus K_N^{(q)}\) arising at endpoints of reduced denominator
\(N\).

On \(K_N^{(q)}(\theta_-,\theta_+)\), Lemma~\ref{lem:intervals} gives
\(\left\lfloor\ell_N(q)\rho/\ell_e(q)\right\rfloor=n_N^{(q)}(\theta_-)\), so the residue in
Lemma~\ref{lem:pure} is
\[
 \delta=[n_N^{(q)}(\theta_-)-\rho]_{q^N}
       =[n_N^{(q)}(\theta_-)-\sigma_N^{(q)}(\theta_-)]_{q^N}.
\]
Since \(n_N^{(q)}(\theta_-)-\sigma_N^{(q)}(\theta_-)\equiv-1\pmod q\), this residue
is not in \(\{0,\ldots,q-2\}\).  Hence no point of \(K_N^{(q)}\) is excluded.
Together with the preceding exclusion of \(J_N^{(q)}\setminus K_N^{(q)}\), this
proves the two assertions.
\end{proof}

For the remainder of this section, let \(N,\theta_-,\theta_+\) be as in
Lemma~\ref{lem:farey-localization}.  Let
\(v_N:=[\lfloor\rho/q^N\rfloor]_q\) for \(\rho=[r]_{\ell_e(q)}\); this is the
base-\(q\) digit of \(\rho\) in position \(N\), namely,
\(\displaystyle \rho=\sum_{j\geqslant0}v_jq^j\) with \(0\leqslant v_j<q\).
The second condition in \eqref{eq:farey-condition} says precisely that
\(v_0=1\) and \(v_k=\varepsilon_k(\theta_-)\) for all
\(1\leqslant k<N\).  We shall prove that the hypotheses of
Lemma~\ref{lem:farey-localization} also force
\(v_N=\varepsilon_N(\theta_-)\).  This is the only point in the necessity argument for \(q>2\) at which odd
characteristic and characteristic two require separate arguments: Proposition~\ref{prop:odd-selector}
handles odd characteristic, while Proposition~\ref{prop:two-selector} handles
characteristic two.

Since \(q=p^m\), we group the base-\(p\) digits into consecutive blocks of
length \(m\), numbered in the same way; block \(N\) records the base-\(q\)
digit in position \(N\).  For a nonzero integer \(u\), let \(\nu_p(u)\)
denote the largest integer \(j\geqslant0\) such that \(p^j\mid u\).
For a condition \(\mathcal C\), write
\(\mathbf1_{\mathcal C}\in\{0,1\}\) for its indicator, namely,
\(\mathbf1_{\mathcal C}=1\) if \(\mathcal C\) holds, and \(0\) otherwise.
By Lemma~\ref{lem:farey-localization}, put
\(\gamma:=\ell_N(q)\rho/\ell_e(q)-n_N^{(q)}(\theta_-)\), so
\(0\leqslant\gamma<1\).  For every integer \(1\leqslant C<q\), put
\[
 t_C:=\left\lfloor\frac{(C\ell_N(q)+1)\rho}{\ell_e(q)}\right\rfloor
     -Cn_N^{(q)}(\theta_-)
     =\left\lfloor C\gamma+\frac{\rho}{\ell_e(q)}\right\rfloor.
\]
Then \(0\leqslant t_C\leqslant C\).  Moreover,
\(1\leqslant C\ell_N(q)+1\leqslant q^N<\ell_e(q)\), so
Lemma~\ref{lem:power-sum} applies at \(\tau=C\ell_N(q)+1\).

\subsection*{Odd characteristic}
\begin{proposition}[Odd-characteristic digit forcing]
\label{prop:odd-selector}
Suppose that \(q=p^m\) with \(p\) odd, that \(X^r(X^{q-1}+a)\)
permutes \(\mathbb F_{q^e}\), and that condition~\eqref{eq:farey-condition} holds.  Then
\(v_N=\varepsilon_N(\theta_-)\).
\end{proposition}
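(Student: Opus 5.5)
The plan is to apply Lemma~\ref{lem:power-sum} at the admissible indices \(\tau=C\ell_N(q)+1\), \(1\leqslant C<q\), and to show the following: if \(v_N\ne\varepsilon_N(\theta_-)\), then for a suitable choice of \(C\) the polynomial \(P_{\rho,\tau}\) is a nonzero monomial, which Lemma~\ref{lem:power-sum} forbids.

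\emph{Step 1: digits of \(H_\tau\).} Compute the base-\(q\) digits of \(H_\tau\). Since
\[
 H_\tau=q^e-q-C(q^N-1)=C+(q-1)(q+\cdots+q^{N-1})+(q-1-C)q^N+(q-1)(q^{N+1}+\cdots+q^{e-1}),
\]
the digits are \(C\) in position \(0\), \(q-1\) in positions \(1,\dots,N-1\), \(q-1-C\) in position \(N\), and \(q-1\) above. By Lucas' theorem applied blockwise in base \(p\), a lower index \(k=y_\tau+i\ell_e(q)\) survives modulo \(p\) if and only if two conditions hold: its digit \(0\) is \(p\)-adically dominated by \(C\), and its digit \(N\) is \(p\)-adically dominated by \(q-1-C\). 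All other positions impose no constraint.

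\emph{Step 2: digits of \(k\).} Compute \(k\) modulo \(q^{N+1}\). By Lemma~\ref{lem:farey-localization} and the definition of \(t_C\), we have \(\lfloor\tau\rho/\ell_e(q)\rfloor=Cn_N^{(q)}(\theta_-)+t_C\). Together with \(\ell_e(q)\equiv\ell_{N+1}(q)\pmod{q^{N+1}}\), this gives
\[
 k\equiv(C\ell_N(q)+1)\rho-\bigl(Cn_N^{(q)}(\theta_-)+t_C-i\bigr)\ell_{N+1}(q)\pmod{q^{N+1}}.
\]
Now substitute the known data:
\begin{itemize}
\item the digits \(v_0=1\) and \(v_k=\varepsilon_k(\theta_-)\) for \(k<N\), together with the unknown \(v_N\);
\item the congruences \eqref{eq:strong-state} for \(n_N^{(q)}\) and \(\sigma_N^{(q)}\).
\end{itemize}
Digit \(0\) of \(k\) is then \([C+1-t_C+i]_q\). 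So the digit-\(0\) condition restricts \(i\) to a short explicit list of residues, each determined by \(t_C\) and \(C\). Digit \(N\) of \(k\) is an affine expression in \(C\), \(v_N\), \(\varepsilon_N(\theta_-)\), \(t_C\), and \(i\), plus a carry from positions \(1,\dots,N-1\). The carry is controlled by \(\gamma\), and through it by \(t_C\).

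\emph{Step 3: choose \(C\) and force the digit.} Choose \(C\) so that the surviving set of \(i\) is a single index when \(v_N\ne\varepsilon_N(\theta_-)\). I would try \(C=1\) first. Then digit \(0\) must lie in \(\{0,1\}\), leaving two candidates \(i\equiv t_1-2\) and \(i\equiv t_1-1\pmod q\) (only those in \(\{0,\dots,q-2\}\) count). Digit \(N\) must be dominated by \(q-2\), which excludes exactly those digits whose lowest base-\(p\) digit is \(p-1\). For the two candidates, the values of digit \(N\) differ by the contribution of \(i\ell_{N+1}(q)\), which shifts digit \(N\) by one. Compare the case \(v_N=\varepsilon_N(\theta_-)\) with the case \(v_N\ne\varepsilon_N(\theta_-)\). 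In the bad case, I expect exactly one of the two candidates to land on a forbidden residue of digit \(N\), while the other survives with coefficient \(\prod\binom{\cdot}{\cdot}\not\equiv0\pmod p\). That makes \(P_{\rho,\tau}\) a nonzero monomial, contradicting Lemma~\ref{lem:power-sum}.

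This is where \(p\) odd enters. The distinction between digit \(N\) being \(\equiv p-1\) or \(\equiv p-2\pmod p\), and the nonvanishing of the binomial coefficients \(\binom{p-2}{\cdot}\), behave well only when \(p\ne2\). If \(C=1\) fails for some configuration of \(t_1\), I would instead use \(C=q-1\) or a \(C\) adapted to \(t_C\), exploiting the fact that \(t_C=\lfloor C\gamma+\rho/\ell_e(q)\rfloor\) varies with \(C\).

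The main obstacle is the exact bookkeeping in Step 2. This means tracking the carry into position \(N\) from \(\rho\cdot C\ell_N(q)\) and from \(-(Cn_N^{(q)}+t_C-i)\ell_{N+1}(q)\) in terms of \(\gamma\), and doing so uniformly over all Farey cells. I would handle it by expressing the carry through \(t_C\) and \(\lfloor\ell_N(q)\rho/\ell_e(q)\rfloor=n_N^{(q)}(\theta_-)\), and then splitting into the cases \(t_C\in\{0,\dots,C\}\) when checking the boundary indices \(i\in\{0,q-2\}\).
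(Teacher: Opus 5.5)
Your $C=1$ probe is the paper's boundary case. It proves the statement whenever $p\nmid d$, where $d:=v_N-\varepsilon_N(\theta_-)$, and in particular it suffices when $q=p$ is prime.

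A small correction to your description: of your two candidates $i\equiv t_1-2$ and $i\equiv t_1-1$, one is always $q-1$ and is omitted. So exactly one candidate remains. Its block-$N$ digit is $[q-1+2d]_q$. The lowest base-$p$ digit of this, $[2d-1]_p$, passes Lucas' test against the $p-2$ of the upper block $q-2$ exactly when $p\nmid 2d$.

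The genuine gap is the case $d\neq0$ with $p\mid d$, which occurs as soon as $q=p^m$ with $m\geqslant2$. There the lone coefficient vanishes modulo $p$, so $P_{\rho,\ell_N(q)+1}=0$, exactly as in the correct case $d=0$. The probe is blind to the deviation whatever $t_1$ is, so no case analysis on $t_1$ can repair it.

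Your suggested fallback $C=q-1$ does no better. For fixed $t_C$ and fixed lowest block, $v_N$ enters block $N$ of the lower index only through $(C+1)v_N$. When $C=q-1$ this vanishes modulo $q$; under the standing hypotheses that probe in fact gives the zero polynomial.

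What is missing is a mechanism that does not rely on monomials. Put $b=p^{\nu_p(d)}$. The paper uses the two probes $C=b$ and $C=(p-1)b$, chosen so that the upper block $N$, namely $q-1-C$, constrains the base-$p$ digit at position $\nu_p(d)$.

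For $C=b$ and $0<t_b<b$, the lowest blocks $0$ and $b$ both survive, with the same Lucas factor at that digit. The result is a genuine binomial $C_-X^{t-1}(1-X^{q-b})$, not a monomial. The relative sign comes from two facts: $\binom{p-1}{u}\equiv(-1)^u\pmod p$ at the unconstrained positions, and, for odd $p$, an integer has the same parity as its base-$p$ digit sum.

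The same bookkeeping for $C=(p-1)b$ gives either a nonzero monomial or $C_+X^i(1+X^{q-b})$. Evaluating both at $z=a^{-\ell_e(q)}$ forces $z^{q-b}=1$ and $z^{q-b}=-1$, which is impossible for odd $p$.

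Your sketch gives no reason to expect that some single $C$ always yields a nonzero monomial, and the natural probe $C=b$ demonstrably does not. The idea you need to add is this two-probe sign comparison, which uses the actual value of $a^{-\ell_e(q)}$ rather than a one-term obstruction.
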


\begin{proof}
Suppose, to the contrary, that
\(d=v_N-\varepsilon_N(\theta_-)\ne0\).  Put
\(j=\nu_p(d)\), \(b=p^j\), and \(d=bw\), where \(p\nmid w\).
Since \(-1\leqslant d\leqslant q-1\) and \(d\ne0\), we have \(j<m\).

First consider the boundary case \(j=0\).  Then \(b=1\) and \(d=w\).
Take \(\tau=\ell_N(q)+1\).  Since \(0\leqslant t_1\leqslant1\), the lowest
base-\(q\) block leaves a unique possible coefficient exponent: it is \(q-2\)
when \(t_1=0\) and \(0\) when \(t_1=1\).  Reducing the corresponding lower
index modulo \(q^{N+1}\), using \((q-1)\ell_e(q)\equiv-1\pmod{q^{N+1}}\),
shows that its digit in block \(N\) is
\[
 [q-1+2w]_q.
\]
Its least significant base-\(p\) digit is \([2w-1]_p\ne p-1\), because
\(p\) is odd and \(p\nmid w\).  The lowest block already satisfies Lucas'
digit inequalities, and every other relevant upper digit is \(p-1\).
Thus \(P_{\rho,\ell_N(q)+1}\) is a nonzero monomial, contradicting
Lemma~\ref{lem:power-sum}.  Hence \(j\geqslant1\).

Now \(p\mid b\).  For \(c\in\{1,p-1\}\), set \(C=cb\) and
\(\tau=C\ell_N(q)+1\).  Then \(C<q\), so the common setup above applies,
and moreover \(p\mid C\).  Put \(t=t_C\) and, for
\(0\leqslant\ell\leqslant c\), put
\[
 i_\ell=[t-C-1+\ell b]_q,\qquad
 \delta_\ell=\mathbf1_{\,t+\ell b\leqslant C}.
\]
The upper index has lowest base-\(q\) block \(C\), block \(N\) equal to
\(q-1-C\), and all other blocks equal to \(q-1\); hence the possible lowest
blocks of a lower index are \(\ell b\), \(0\leqslant\ell\leqslant c\), with
coefficient exponent \(i_\ell\).  The value \(i_\ell=q-1\) is outside
\(0,\ldots,q-2\) and is omitted.  For a remaining index put
\[
 \begin{aligned}
 A_\ell&=(C\ell_N(q)+1)\rho+(i_\ell-Cn_N^{(q)}(\theta_-)-t)\ell_e(q),\\
 E_\ell&=(q-1-C)(\sigma_N^{(q)}(\theta_-)-1)+Cn_N^{(q)}(\theta_-)
          +q(1-\delta_\ell)-\ell b.
 \end{aligned}
\]
The bounds and congruences in Lemma~\ref{lem:labels} give
\(-q<E_\ell<q^N\), and reduction modulo \(q^{N+1}\) gives
\[
 A_\ell\equiv-\ell_N(q)E_\ell
 +q^N\bigl(\sigma_N^{(q)}(\theta_-)+(C+1)v_N+i_\ell-Cn_N^{(q)}(\theta_-)-t\bigr)
 \pmod{q^{N+1}}.
\]
Put
\[
 \lambda=\frac{(q-1-C)(\sigma_N^{(q)}(\theta_-)-1)+Cn_N^{(q)}(\theta_-)-C\varepsilon_N(\theta_-)}{q-1}\in\mathbb Z.
\]
Then \(\lambda\equiv C\varepsilon_N(\theta_-)\pmod q\) and
\(\lceil\ell_N(q)E_\ell/q^N\rceil=\lceil E_\ell/(q-1)\rceil
=\lambda+\varepsilon_N(\theta_-)+1-\delta_\ell\).
Here the last equality uses \(\delta_\ell=1\Rightarrow\ell b\leqslant C-1\)
and \(\delta_\ell=0\Rightarrow1\leqslant\ell b\leqslant C\).  Hence the digit in
block \(N\) of \(A_\ell\) is
\[
 \bigl[q-C+\ell b-1+\delta_\ell+(C+1)(v_N-\varepsilon_N(\theta_-))\bigr]_q.
\]
Since \(v_N-\varepsilon_N(\theta_-)=bw\) and \(b=p^j\), the relevant
base-\(p\) digit before the deviation is added is
\([\ell-c-1+\delta_\ell]_p\).  Moreover,
\((C+1)bw/b=(C+1)w\equiv w\pmod p\), because \(p\mid C\).  Hence the
relevant base-\(p\) digit is
\begin{equation}
 \kappa_\ell=[w-c+\ell-1+\delta_\ell]_p.
 \label{eq:kappa-odd}
\end{equation}
At every other base-\(p\) position, the corresponding upper digit is \(p-1\), and
\(\displaystyle\binom{p-1}{u}\equiv(-1)^u\pmod p\) for \(0\leqslant u<p\).

For \(c=1\), if \(t\in\{0,b\}\), only one coefficient index remains,
and its relevant digit is \([w-1]_p\ne p-1\); hence the coefficient polynomial
is a nonzero monomial.  If \(0<t<b\), both coefficient indices occur and have
the same nonzero Lucas factor at the relevant base-\(p\) digit, and their
exponents differ by \(q-b\).  Their digits in position \(j\) of the lowest
base-\(q\) block are \(0\) and \(1\).  Since \(p\) is odd, an integer has the
same parity as its base-\(p\) digit sum; the two lower indices differ by the
even number \((q-b)\ell_e(q)\).  Thus their total digit-sum parities agree,
so the remaining contributions from positions with upper digit \(p-1\) have opposite sign.  Thus
\begin{equation}
 P_{\rho,b\ell_N(q)+1}(X)=C_-X^{t-1}(1-X^{q-b}),
 \qquad C_-\ne0.
 \label{eq:minus-probe}
\end{equation}

For \(c=p-1\), \eqref{eq:kappa-odd} is zero precisely when
\(\ell+\delta_\ell\equiv-w\pmod p\).  As \(\ell\) runs from \(0\) to \(p-1\),
the sequence \(\ell+\delta_\ell\) contains every nonzero residue, with only
the transition value possibly repeated.  Since \(-w\ne0\), one or two
indices remain.  If the omitted value \(i_\ell=q-1\) corresponds to a
solution, it is either the irrelevant zero residue at \(t=0\) or the repeated
transition value, so another solution remains.  In the two-term case the
exponents differ by \(q-b\).  As above, the total base-\(p\) digit-sum
parities agree; every allowed position has upper digit \(p-1\), except for the
forced zero digit, so the two coefficients have the same sign.  Consequently
the coefficient polynomial is a nonzero monomial or
\begin{equation}
 P_{\rho,(p-1)b\ell_N(q)+1}(X)
 =C_+X^i(1+X^{q-b}),\qquad C_+\ne0.
 \label{eq:plus-probe}
\end{equation}

If either coefficient polynomial is a monomial,
Lemma~\ref{lem:power-sum} contradicts the permutation hypothesis.  Otherwise
evaluating \eqref{eq:minus-probe} and \eqref{eq:plus-probe} at
\(a^{-\ell_e(q)}\) would force
\((a^{-\ell_e(q)})^{q-b}=1\) and \((a^{-\ell_e(q)})^{q-b}=-1\),
impossible in odd characteristic.  Hence
\(v_N=\varepsilon_N(\theta_-)\).
\end{proof}

\subsection*{Characteristic two}
Assume now \(q=2^m\), \(m\geqslant2\).  To retain
coefficient information modulo \(4\) that disappears modulo \(2\), we lift the relevant
coefficient identities to an auxiliary ring of characteristic \(4\).  For odd
\(1\leqslant\tau<\ell_e(q)\), the integer \(H_\tau\) is even.  We retain the same coefficients modulo \(4\), encoded by
\[
 P^{(4)}_{\rho,\tau}(X):=\sum_{i=0}^{q-2}
 \displaystyle\binom{H_\tau}{y_\tau+i\ell_e(q)}X^i
 \in(\mathbb Z/4\mathbb Z)[X].
\]

Put \(Q:=q^e=2^{me}\).  Choose a monic irreducible
\(g\in\mathbb F_2[U]\) of degree \(me\), a monic lift
\(\widetilde g\in(\mathbb Z/4\mathbb Z)[U]\), set
\(\mathcal R:=(\mathbb Z/4\mathbb Z)[U]/(\widetilde g)\), and fix an
isomorphism \(\mathcal R/2\mathcal R\cong\mathbb F_Q\).
For \(x\in\mathbb F_Q\), choose any lift \(u\in\mathcal R\) and put
\(\widehat{x}:=u^Q\).
Since \(Q\) is even and \(4=0\) in \(\mathcal R\),
\((u+2w)^Q=u^Q\); hence \(\widehat{x}\) is well defined, reduces to \(x\),
and satisfies \(\widehat{x}^{\,Q}=\widehat{x}\).  If \(z\in\mathcal R\) is any
lift of \(x\) with \(z^Q=z\), then using \(z\) in the definition gives
\(\widehat{x}=z^Q=z\).  Thus \(\widehat{x}\) is the unique lift
of \(x\) fixed by the \(Q\)-power map.  Using \(uv\) as a lift of \(xy\) then
gives \(\widehat{xy}=\widehat{x}\widehat{y}\).  Also, if
\(b\in\mathcal R\) has nonzero residue \(\bar b\in\mathcal R/2\mathcal R\), lift
\(\bar b^{-1}\) to \(c\); then
\(bc=1+2w\) and \((1+2w)^{-1}=1-2w\).  Thus \(b\) is a unit, so
\(\widehat{x}\) is a unit exactly when \(x\ne0\).

\begin{lemma}[Mod-\(4\) lift of the Hermite sums]
\label{lem:mod4-lift}
If \(X^r(X^{q-1}+a)\) permutes \(\mathbb F_{q^e}\) and
\(1\leqslant\tau<\ell_e(q)\) is odd, then
\[
 P^{(4)}_{\rho,\tau}\bigl(\widehat{a^{-\ell_e(q)}}\bigr)=0
 \qquad\text{in }\mathcal R.
\]
In particular, \(P^{(4)}_{\rho,\tau}(X)=2X^i\) is impossible.
\end{lemma}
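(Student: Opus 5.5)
Our plan is to redo the derivation of Lemma~\ref{lem:power-sum} inside \(\mathcal R\), with Teichmüller-type lifts \(\widehat x\) in place of \(x\).

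Put \(f(X)=X^r(X^{q-1}+a)\) and \(\widehat f(X)=X^r(X^{q-1}+\widehat a)\in\mathcal R[X]\), and consider
\[
 S:=\sum_{x\in\mathbb F_Q}\widehat f(\widehat x)^{H_\tau}\in\mathcal R .
\]
We compute \(S\) in two ways.

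\emph{The character-sum computation.} Expand by the binomial theorem over \(\mathbb Z\) and reduce modulo \(4\):
\[
 S=\sum_{A}\binom{H_\tau}{A}\widehat a^{\,H_\tau-A}\sum_x\widehat x^{\,rH_\tau+(q-1)A}.
\]
The lifts \(\widehat x\) for \(x\ne0\) form a cyclic group of order \(Q-1\) inside \(\mathcal R^\times\). This is the image of \(\mathbb F_Q^\times\) under the multiplicative map \(x\mapsto\widehat x\), which is injective because it reduces to the identity. So the sum \(\sum_{x\ne0}\widehat x^{\,d}\) equals \(Q-1=-1\) in \(\mathcal R\) when \((Q-1)\mid d\). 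When \((Q-1)\nmid d\), pick a generator \(\widehat g\). Then \(\widehat g^{\,d}-1\) reduces to a nonzero element, hence is a unit, and the geometric-series identity \((\widehat g^{\,d}-1)\sum=\widehat g^{\,d(Q-1)}-1=0\) forces the sum to vanish. The term \(x=0\) contributes nothing, since all exponents are positive. Exactly as in Lemma~\ref{lem:power-sum}, this gives
\[
 S=-\widehat a^{\,H_\tau-y_\tau}P^{(4)}_{\rho,\tau}\bigl(\widehat a^{\,-\ell_e(q)}\bigr).
\]
Here \(\widehat{a^{-\ell_e(q)}}=\widehat a^{\,-\ell_e(q)}\) by multiplicativity, and \(\widehat a\) is a unit.

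\emph{Using the permutation hypothesis.} This is the main obstacle. The value \(\widehat f(\widehat x)\) is a lift of \(f(x)\), but it need not be the Teichmüller lift \(\widehat{f(x)}\), so we cannot simply reindex the sum by the permutation. Here we use that \(H_\tau\) is even, which holds because \(\tau\) is odd and \(q\) is even. If \(u\equiv u'\pmod 2\), then \(u^2\equiv u'^2\pmod 4\). Hence \(\widehat f(\widehat x)^{H_\tau}=\widehat{f(x)}^{\,H_\tau}\) for every lift, and therefore
\[
 S=\sum_{x}\widehat{f(x)}^{\,H_\tau}=\sum_{z\in\mathbb F_Q}\widehat z^{\,H_\tau}
\]
because \(f\) is a bijection. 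Now \(0<H_\tau<Q-1\), so by the same group sum this is \(0\). Since \(\widehat a\) is a unit, \(P^{(4)}_{\rho,\tau}(\widehat{a^{-\ell_e(q)}})=0\).

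\emph{The final remark.} Suppose \(P^{(4)}_{\rho,\tau}(X)=2X^i\). Evaluating at the unit \(\widehat{a^{-\ell_e(q)}}\) gives \(2\cdot(\text{unit})\). This is nonzero, because \(2\mathcal R\cong\mathcal R/2\mathcal R\) via \(2u\mapsto\bar u\), and the residue of a unit is nonzero. That contradicts the vanishing just proved.
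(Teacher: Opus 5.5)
Your proposal is correct and follows essentially the same route as the paper: you compute \(\sum_{x}\widehat f(\widehat x)^{H_\tau}\) in two ways, using the evenness of \(H_\tau\) to replace \(\widehat f(\widehat x)^{H_\tau}\) by \(\widehat{f(x)}^{H_\tau}\), and using the geometric-series identity over the lifted cyclic group \(\{\widehat x : x\ne0\}\). Your justification that \(2\cdot(\text{unit})\ne0\) via \(2\mathcal R\cong\mathcal R/2\mathcal R\) is just a more explicit version of the paper's closing remark.
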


\begin{proof}
Put \(\mathcal T=\{\widehat{x}:x\in\mathbb F_Q\}\).  By multiplicativity, reduction identifies \(\mathcal T^\times\) with the cyclic
group \(\mathbb F_Q^\times\) of order \(q^e-1\).  Hence, for every positive
integer \(d\), the geometric-series identity gives
\[
 \sum_{t\in\mathcal T}t^d=
 \begin{cases}
 q^e-1,&(q^e-1)\mid d,\\
 0,&(q^e-1)\nmid d.
 \end{cases}
\]
Indeed, when \((q^e-1)\nmid d\), the denominator in the geometric series is a
unit because its residue is nonzero.

Put \(\widehat f(X)=X^r(X^{q-1}+\widehat a)\).  If two elements of
\(\mathcal R\) are congruent modulo \(2\mathcal R\), write them as \(u\) and
\(u+2w\).  For even \(n>0\), the binomial expansion of \((u+2w)^n\) has
linear correction term \(2n u^{n-1}w=0\), while every higher correction term
is divisible by \(4\); hence \((u+2w)^n=u^n\).  Thus elements congruent modulo \(2\mathcal R\) have the same positive even
powers.  Since \(H_\tau\) is even and
\(\widehat f(\widehat x)\) reduces to \(f(x)\),
\[
 \sum_{t\in\mathcal T}\widehat f(t)^{H_\tau}
 =\sum_{x\in\mathbb F_Q}\widehat{f(x)}^{\,H_\tau}
 =\sum_{x\in\mathbb F_Q}\widehat{x}^{\,H_\tau}=0,
\]
where the second equality uses that \(f\) permutes \(\mathbb F_Q\), and the
last follows from \(0<H_\tau<q^e-1\) and the preceding identity.

Since \(q^e-1\equiv-1\pmod 4\), expanding the first sum and using the same
divisibility condition as in Lemma~\ref{lem:power-sum} gives
\[
 0=-\widehat a^{\,H_\tau-y_\tau}
 P^{(4)}_{\rho,\tau}(\widehat a^{-\ell_e(q)}).
\]
By multiplicativity,
\(\widehat a^{-\ell_e(q)}=\widehat{a^{-\ell_e(q)}}\).  The evaluation point is a
unit and \(2\ne0\) in \(\mathcal R\), so a polynomial \(2X^i\) cannot vanish
there.
\end{proof}

\smallskip
\noindent\textbf{Kummer's theorem.} Recall that, for a nonzero integer \(u\),
\(\nu_2(u)\) denotes the largest integer \(j\geqslant0\) such that \(2^j\mid u\).
In the case \(p=2\), Kummer's theorem states that, for
\(0\leqslant A\leqslant n\), \(\nu_2\displaystyle\binom nA\) equals the number
of carries in the base-\(2\) addition of \(A\) and \(n-A\); see
\cite[Theorem~10.2.2]{AndreescuAndrica2009}, and also the original paper
\cite[p.~116]{Kummer1852}.  Equivalently, it is the number of borrows in the base-\(2\) subtraction of
\(A\) from \(n\).  Thus zero, one, or at least two borrows correspond,
respectively, to an odd coefficient, a coefficient
\(2\pmod 4\), or one divisible by \(4\).  For
\(A\notin\{0,\ldots,n\}\), we use the convention \(\displaystyle\binom nA=0\).

\begin{proposition}[Characteristic-two digit forcing]
\label{prop:two-selector}
Suppose that \(q=2^m\) with \(m\geqslant2\), that \(X^r(X^{q-1}+a)\)
permutes \(\mathbb F_{q^e}\), and that condition~\eqref{eq:farey-condition} holds.  Then
\(v_N=\varepsilon_N(\theta_-)\).
\end{proposition}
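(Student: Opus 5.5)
We mirror the odd-characteristic argument of Proposition~\ref{prop:odd-selector}, replacing the mod-\(p\) sign bookkeeping by mod-\(4\) information through Lemma~\ref{lem:mod4-lift} and Kummer's theorem. Suppose, for contradiction, that \(d=v_N-\varepsilon_N(\theta_-)\ne0\), and write \(d=bw\) with \(b=2^j\), \(w\) odd and \(j<m\). All probes use \(\tau=C\ell_N(q)+1\) with \(1\leqslant C<q\). Such a \(\tau\) is odd, because \(\ell_N(q)\) is odd and \(C\) will be even, or \(\tau=\ell_N(q)+1\) is handled separately. So \(H_\tau\) is even and Lemma~\ref{lem:mod4-lift} applies. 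Each lower index \(A_\ell=y_\tau+i_\ell\ell_e(q)\) will be computed digit by digit exactly as in the odd case. The lowest block is \(\ell b\) and the exponent is \(i_\ell=[t-C-1+\ell b]_q\). Block \(N\) is \([q-C+\ell b-1+\delta_\ell+(C+1)bw]_q\), and every other block of \(H_\tau\) is \(q-1\). That block-\(N\) formula was derived characteristic-free from Lemma~\ref{lem:labels}, so we reuse it verbatim.

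\emph{Boundary case \(j=0\).} Here \(d\) is odd. We take \(\tau=\ell_N(q)+1\), which is even, so Lemma~\ref{lem:mod4-lift} does not apply directly. Instead we try to show that mod \(2\) the polynomial \(P_{\rho,\tau}\) is a nonzero monomial, using Lemma~\ref{lem:power-sum}. The block-\(N\) digit is \([q-1+2w]_q\). Its lowest bit is \(1\), so Lucas alone does not kill it. We therefore compare with the upper digit of \(H_\tau\) in block \(N\), which is \(q-2\), and check whether \(2w-1\bmod q\) has a bit where \(q-2\) has \(0\). Only the lowest bit of \(q-2\) is \(0\), and \([2w-1]_q\) is odd, so the coefficient vanishes mod \(2\). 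This suggests the right probe for \(j=0\) is different: we take \(C=2\) (or \(C=q/2\)) so that \(\tau\) is odd. We then count borrows. One borrow, occurring in block \(N\) at bit \(0\), gives coefficient \(2\) modulo \(4\). If the other candidate index either lies outside \(\{0,\ldots,q-2\}\) or has at least two borrows, then \(P^{(4)}=2X^i\), contradicting Lemma~\ref{lem:mod4-lift}.

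\emph{General case \(j\geqslant1\).} We use \(C=b\), i.e.\ \(c=1\), and possibly \(C=2b\) or \(C=q-b\) as a second probe. The candidate indices are \(\ell\in\{0,1\}\), and their block-\(N\) bit at position \(j\) is \([w-c+\ell-1+\delta_\ell]_2\), as in \eqref{eq:kappa-odd}. By Kummer's theorem the mod-\(4\) value of each coefficient is determined by the number of borrows. The subtraction from \(H_\tau\) borrows only where the lower digit exceeds the upper digit, and that can happen only at the two non-\((q-1)\) blocks (the lowest block \(C\) and block \(N\), which is \(q-1-C\)), plus propagation. We aim to show that in each configuration of \(t\in[0,C]\), exactly one surviving index has exactly one borrow while the others have zero coefficient or at least two borrows. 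If instead one configuration yields \(X^{t-1}(1+X^{q-b})\) with unit coefficients, we pair it with a second probe yielding \(X^{i}(1+2X^{q-b})\) or similar. Combining the two evaluations at \(\widehat{a^{-\ell_e(q)}}\) in \(\mathcal R\) forces an impossible identity, since in odd characteristic the earlier contradiction came from \(\pm1\), and here \(1\ne-1\) in \(\mathcal R\) modulo \(4\) (\(-1\equiv3\)).

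The main obstacle is the borrow analysis. In characteristic \(2\) every nonvanishing mod-\(2\) coefficient is \(\pm1\), which is exactly why the endpoint distinction is lost. We must track carry propagation across the block boundary at bit \(j\) of block \(N\), and distinguish \(\delta_\ell=0,1\) and \(t\in\{0,b\}\) versus \(0<t<b\). We expect two-term cases to produce coefficients \(1\) and \(3\) modulo \(4\), i.e.\ \(X^{i}(1-X^{q-b})\) lifted. Pairing these with a single-borrow \(2X^i\) monomial from a second choice of \(C\) should close the argument.
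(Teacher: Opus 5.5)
\paragraph{Odd deviations.} You correctly see why the odd-characteristic probe \(\tau=\ell_N(q)+1\) fails here. Its single candidate has the odd block-\(N\) digit \([2d-1]_q\) against the upper digit \(q-2\), so \(P_{\rho,\tau}\equiv0\pmod 2\); and \(\tau\) is even, so Lemma~\ref{lem:mod4-lift} is unavailable. From there on, though, the proposal is a plan whose concrete commitments do not hold.

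You never analyze \(C=2\). You only say that \emph{if} the other candidates vanish modulo \(4\), the result is \(2X^i\). For \(\tau=2\ell_N(q)+1\) the following happens:
\begin{enumerate}[label=(\alph*)]
\item The upper block \(N\) is \(q-3\), whose zero bit is bit \(1\), not bit \(0\).
\item Lucas leaves endpoint configurations, namely \(d\equiv1\pmod{4}\) with \(t_2=2\), and \(d\equiv3\pmod{4}\) with \(t_2=0\). Their mod-\(4\) analysis depends on further bits of the unknown \(d\).
\item At such an endpoint several indices can carry exactly one borrow. For example, if \(q\geqslant8\), \(d\equiv1\pmod{8}\) and \(t_2=2\), the indices with lowest blocks \(1\) and \(2\) both do.
\item The upper block \(1\) is \(q-1\) for \(N\geqslant2\), so a borrow leaving block \(0\) (lowest blocks \(q/2\), \(q/2+2\)) can be absorbed there. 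Those candidates therefore depend on block-\(1\) digits that your formula does not control.
\end{enumerate}

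The missing idea is the paper's probe \(\tau=\ell_N(q)+2\), whose upper blocks are \((2,q-2,q-1,\ldots,q-1,q-2,q-1,\ldots)\). The \(q-2\) in block \(1\) makes every borrow entering block \(1\) cost a second one. The congruences \eqref{eq:strong-state}, valid for \(N\geqslant2\), fix blocks \(0\) and \(1\) of each candidate. Lucas then already gives a monomial unless \((\varepsilon_N(\theta_-),c)\in\{(0,0),(1,2)\}\), and in those two cases the mod-\(4\) polynomial is exactly \(2X^{q-2}\) or \(2\). The level \(N=1\) needs its own probe \(\tau=1\), where \(P^{(4)}_{\rho,1}=2X^{q/2-1}\) for odd \(v_1\).

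\paragraph{Even deviations.} Here your stated aim is false, and the fallback is beyond your tools. Take \(C=b\) and \(0<t_b<b\). Both candidates have block-\(N\) digits \([q-b+(b+1)d]_q\) and \([q-1+(b+1)d]_q\). Since \((b+1)d\equiv b\pmod{2b}\), their bit in position \(\log_2 b\), the only zero bit of \(q-1-b\), vanishes. So both coefficients are odd, and no candidate has exactly one borrow. (For \(t_b\in\{0,b\}\), the single odd coefficient already gives a monomial modulo \(2\).)

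Your fallback would compare the signs \(\pm1\pmod{4}\) of odd coefficients across two probes. Kummer's theorem only gives the \(2\)-adic valuation, so it cannot decide those signs, and you exhibit no second probe with the opposite sign.

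In fact no mod-\(4\) information is needed here.
\begin{itemize}
\item When \(b=q/2\), the mod-\(2\) relation \(1+(a^{-\ell_e(q)})^{q-b}=0\) already contradicts \((-a)^{\ell_e(q)}\ne1\), because \(a^{-\ell_e(q)}\in\mathbb F_q^\times\) and \(\gcd(q/2,q-1)=1\). This case is absent from your plan, and there \(C=2b=q\) is not admissible.
\item When \(b<q/2\), use the probe \(C=2b\). Then \((2b+1)d\equiv b\) or \(3b\pmod{4b}\), so the bit in position \(\log_2(2b)\), the unique zero bit of \(q-1-2b\), is complementary on the two candidates. The coefficient polynomial is therefore a monomial modulo \(2\).
\end{itemize}
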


\begin{proof}
We treat three cases: nonzero even deviations; odd deviations at \(N=1\);
and odd deviations for \(N\geqslant2\), where two endpoint cases require the
mod-\(4\) lift.

\emph{Even deviations.} First suppose \(d=v_N-\varepsilon_N(\theta_-)\ne0\) is even and put
\(b=2^{\nu_2(d)}\).  Then \(b\leqslant q/2\), so
\(\tau=b\ell_N(q)+1\) is odd and admissible.
The lowest base-\(q\) block leaves only the coefficient exponents
\(i_0=[t_b-b-1]_q\) and \(i_b=[t_b-1]_q\).  Reducing modulo
\(q^{N+1}\) and using the digit congruence derived from
\eqref{eq:farey-condition} together with
\((q-1)\ell_e(q)\equiv-1\pmod{q^{N+1}}\) gives the digits in block \(N\)
\[
 [q-b+(b+1)d]_q,\qquad [q-1+(b+1)d]_q.
\]
Since \(d/b\) is odd and \(2b\mid q\), we have
\((b+1)d\equiv b\pmod{2b}\); hence the bit in position \(\log_2 b\) is zero for both candidates.
Thus, if \(t_b\in\{0,b\}\), the coefficient polynomial is a monomial,
which is impossible by Lemma~\ref{lem:power-sum}.  Hence \(0<t_b<b\), and the two surviving
terms give
\begin{equation}
 X^{t_b-1}(1+X^{q-b}).
 \label{eq:two-even-row}
\end{equation}
If \(b=q/2\), then
\((a^{-\ell_e(q)})^{q-1}=a^{-(q-1)\ell_e(q)}=a^{-(q^e-1)}=1\), so
\(a^{-\ell_e(q)}\in\mathbb F_q^{\times}\).  Evaluating
\eqref{eq:two-even-row} at \(a^{-\ell_e(q)}\) gives
\((a^{-\ell_e(q)})^{q/2}=1\); since \((q/2,q-1)=1\), this forces
\(a^{-\ell_e(q)}=1\), contradicting \(a^{\ell_e(q)}\ne1\).  If \(b<q/2\), then \(2b<q\), so the
common setup also applies with \(C=2b\).  The formula for \(t_C\), together with
\(0<t_b<b\) and \(0<\rho/\ell_e(q)<1\), implies
\(0<t_{2b}<2b\).  Hence both coefficient indices for
\(2b\ell_N(q)+1\) occur.  For \(C=2b\), their block-\(N\) digits are
\([q-2b+(2b+1)d]_q\) and \([q-1+(2b+1)d]_q\).  Since
\(4b\mid q\) and \(d/b\) is odd,
\((2b+1)d\equiv b\) or \(3b\pmod{4b}\).
Thus the bit in position \(\log_2(2b)\) is complementary on the two
candidates.  As this is the unique zero bit of the upper block
\(q-1-2b\), exactly one coefficient is nonzero modulo \(2\), again
contradicting Lemma~\ref{lem:power-sum}.  Thus every nonzero even deviation is impossible.

\emph{Odd deviations at \(N=1\).} At the first level
\(N=1\), write \(\rho\equiv1+v_1q\pmod{q^2}\).  For \(\tau=1\), the
upper index \(q^e-q\) has its lowest \(m\) binary digits zero and all
higher digits one.  Exactly one borrow occurs only when the lower
index is congruent to \(q/2\pmod q\) and the next binary digit is
zero.  Hence
\[
 P^{(4)}_{\rho,1}=
 \begin{cases}
 2X^{q/2-1},&v_1\text{ odd},\\
 0,&v_1\text{ even}.
 \end{cases}
\]
Lemma~\ref{lem:mod4-lift} excludes odd \(v_1\), while the preceding
even-deviation argument excludes every nonzero even \(v_1\).  Thus
\(v_1=0=\varepsilon_1(\theta_-)\).

\emph{Odd deviations for \(N\geqslant2\).} Now suppose
\(v_N-\varepsilon_N(\theta_-)\) is odd.  Put
\[
 \tau=\ell_N(q)+2,\qquad
 c=\left\lfloor\gamma+\frac{2\rho}{\ell_e(q)}\right\rfloor\in\{0,1,2\}.
\]
Here \(\tau\) is odd and \(\tau<\ell_e(q)\).  The upper base-\(q\) blocks
are \((2,q-2,q-1,\ldots,q-1,q-2,q-1,\ldots)\), with the second
\(q-2\) in position \(N\).  The lowest block leaves
candidates with coefficient exponents
\(i_0=[c-3]_q\) and \(i_2=[c-1]_q\).  The congruences \eqref{eq:strong-state} show that block 1 leaves only
\(i_2\) when
\(\varepsilon_N(\theta_-)=0\), and only \(i_0\) when \(\varepsilon_N(\theta_-)=1\).  The
relevant base-\(p\) bit in block \(N\) is then zero because
\(v_N-\varepsilon_N(\theta_-)\) is odd.  Hence the coefficient
polynomial modulo \(2\) is a monomial except in the two endpoint cases
\((\varepsilon_N(\theta_-),c)=(0,0)\) and \((1,2)\).  Outside these endpoints,
Lemma~\ref{lem:power-sum} gives an immediate contradiction.

At the two endpoints the coefficient polynomial modulo \(2\) is zero.
We claim that the corresponding coefficient polynomials modulo \(4\) are
\begin{equation}
 2X^{q-2},\qquad 2.
 \label{eq:endpoint-mod4}
\end{equation}
Indeed, by Kummer's theorem above, a coefficient nonzero modulo \(4\) but
zero modulo \(2\) must correspond to exactly one borrow.  For a candidate with exponent \(i\), put
\[
 A_i=(\ell_N(q)+2)\rho+(i-n_N^{(q)}(\theta_-)-c)\ell_e(q).
\]
Since \(N\geqslant2\), the congruences \(\rho\equiv1\pmod{q^2}\),
\(n_N^{(q)}(\theta_-)\equiv q\varepsilon_N(\theta_-)\pmod{q^2}\), and
\(\ell_N(q)\equiv\ell_e(q)\equiv1+q\pmod{q^2}\) give
\(A_i\equiv(3-c+i)+q(1-c+i-\varepsilon_N(\theta_-))\pmod{q^2}\).
If exactly one binary borrow occurs, an even lower index can only have
lowest base-\(q\) block \(0\) or \(2\), while an odd lower index must
have lowest block \(1\).  The only candidates requiring inspection are
therefore
\[
\begin{array}{c|c|c}
(\varepsilon_N(\theta_-),c)&i&(\text{block }0,\text{ block }1)\\ \hline
(0,0)&q-3&(0,q-1)\\
(0,0)&q-2&(1,0)\\
(1,2)&1&(2,q-1)\\
(1,2)&0&(1,q-2).
\end{array}
\]
For the two even candidates, the upper block 1 is \(q-2\) whereas the
lower block 1 is \(q-1\); the borrow therefore traverses all
\(m\geqslant2\) binary positions of that block, so the valuation is at least
\(2\).

For the two odd candidates, the borrow starts at bit \(0\) and stops at bit
\(1\).  To check that no further borrow occurs, write
\(s=\sigma_N^{(q)}(\theta_-)\), \(n=n_N^{(q)}(\theta_-)\), and \(v=v_N\).
For the endpoint \((\varepsilon_N(\theta_-),c)=(0,0)\), put
\(k_0=(s+q-2-n)/(q-1)\); for \((\varepsilon_N(\theta_-),c)=(1,2)\), put
\(k_1=(s-n)/(q-1)\).  Lemma~\ref{lem:labels} shows that \(k_0,k_1\) are odd
integers, and its bounds give
\(0<2s-k_0<q^N\) and
\(-q^N<2s-k_1-2\ell_N(q)<0\).  Expanding the corresponding \(A_i\) modulo
\(q^{N+1}\), the block-\(N\) digits of the odd candidates \(i=q-2\) and
\(i=0\) are, respectively,
\[
 [3v+q-2-n+k_0]_q,
 \qquad
 [3v-n+k_1-3]_q.
\]
In the first endpoint \(v\) is odd, while in the second it is even; also
\(n\) is even and \(k_0,k_1\) are odd.  Hence both displayed digits are
even.  The upper block \(q-2\) has its only zero binary digit in the least
significant position, so these even lower digits create no borrow in block
\(N\); all its remaining binary digits are \(1\).  Thus the unique valuation-one
terms have exponents \(q-2\) and \(0\), respectively, and coefficient
\(2\pmod 4\); every other candidate is zero modulo \(4\).  This proves
\eqref{eq:endpoint-mod4}.
Lemma~\ref{lem:mod4-lift} gives the contradiction.  Therefore odd
deviations are impossible as well, and \(v_N=\varepsilon_N(\theta_-)\).
\end{proof}

Together, Lemma~\ref{lem:farey-localization} and
Propositions~\ref{prop:odd-selector} and~\ref{prop:two-selector} give the one-step
mechanism in the second stage: condition~\eqref{eq:farey-condition} first localizes
the normalized residue to the stable interval \(K_N^{(q)}\), and the permutation
hypothesis then forces the actual next digit to be
\(v_N=\varepsilon_N(\theta_-)\).  Section~\ref{sec:propagation} iterates this
mechanism through the Farey levels and then passes to the third, purely arithmetic
stage.

\section{Farey propagation and the inverse congruence}
\label{sec:propagation}

We first complete the second stage by iterating the one-step mechanism through
levels \(1,\ldots,e-1\).  Proposition~\ref{prop:propagation} carries the Farey cell
and the digit congruence from one level to the next, and
Corollary~\ref{cor:quotients} records the resulting binary quotient recursion.  We
then complete the third stage by converting that recursion into the inverse
congruence through cyclic rotations.

\begin{proposition}[Farey propagation]
\label{prop:propagation}
Let \(q=p^m>2\), let \(e\geqslant2\), and suppose that
\(X^r(X^{q-1}+a)\) permutes \(\mathbb F_{q^e}\).  Put \(\rho=[r]_{\ell_e(q)}\),
and let \(1\leqslant N<e\).  Then:
\begin{enumerate}[label=\textup{(\arabic*)},leftmargin=*,itemsep=1pt,topsep=2pt]
\item There exists a unique adjacent pair \(\theta_-<\theta_+\) in \(F_N\) such that
\[
 \frac{\rho}{\ell_e(q)}\in K_N^{(q)}(\theta_-,\theta_+),\qquad
 \rho\equiv\sigma_N^{(q)}(\theta_-)\pmod{q^N}.
\]
\item There exists a unique adjacent pair \(\theta_-'<\theta_+'\) in \(F_{N+1}\) such that
\[
 \frac{\rho}{\ell_e(q)}\in J_{N+1}^{(q)}(\theta_-',\theta_+').
\]
\end{enumerate}
\end{proposition}

\begin{proof}
We prove \textup{(1)} by induction on \(N\), with \textup{(2)} providing the
passage from level \(N\) to level \(N+1\).  For \(N=1\), Lemma~\ref{lem:pure}
gives \(1\leqslant\rho<\ell_e(q)\) and \(\rho\equiv1\pmod q\).  Since
\(K_1^{(q)}(0,1)=[0,1)\) and \(\sigma_1^{(q)}(0)=1\), the unique pair
\(0<1\) has the properties in \textup{(1)}.

Assume that \textup{(1)} holds at level \(N<e\), with pair
\(\theta_-<\theta_+\).  Corollary~\ref{cor:one-step-intervals} shows that
\(K_N^{(q)}(\theta_-,\theta_+)\) is either a single
\(J_{N+1}^{(q)}\)-interval or the disjoint union of two such intervals.  Hence
\(\rho/\ell_e(q)\) lies in a unique interval
\(J_{N+1}^{(q)}(\theta_-',\theta_+')\), which proves \textup{(2)}.

If \(N=e-1\), there is nothing further to prove.  Suppose now that \(N<e-1\).
Because \(K_N^{(q)}(\theta_-,\theta_+)\subseteq
J_N^{(q)}(\theta_-,\theta_+)\), condition~\eqref{eq:farey-condition} holds.  Let
\(v_N=[\lfloor\rho/q^N\rfloor]_q\).  Propositions~\ref{prop:odd-selector}
and~\ref{prop:two-selector} give
\(v_N=\varepsilon_N(\theta_-)\).  Therefore
\[
 \rho\equiv\sigma_N^{(q)}(\theta_-)+\varepsilon_N(\theta_-)q^N
 =\sigma_{N+1}^{(q)}(\theta_-)\pmod{q^{N+1}},
\]
where the last equality is by Lemma~\ref{lem:labels}.

For the pair \(\theta_-'<\theta_+'\) supplied by \textup{(2)}, one has
\(\theta_-\leqslant\theta_-'<\theta_+\).  Hence Lemma~\ref{lem:labels} gives
\(\sigma_N^{(q)}(\theta_-')=\sigma_N^{(q)}(\theta_-)\) and
\(\varepsilon_N(\theta_-')=\varepsilon_N(\theta_-)\), so
\[
 \rho\equiv\sigma_{N+1}^{(q)}(\theta_-')\pmod{q^{N+1}}.
\]
Applying Lemma~\ref{lem:farey-localization} at level \(N+1\) yields
\[
 \frac{\rho}{\ell_e(q)}\in K_{N+1}^{(q)}(\theta_-',\theta_+'),
\]
which is \textup{(1)} at level \(N+1\) and completes the induction.
\end{proof}

\begin{corollary}
\label{cor:quotients}
Let \(q>2\) be a prime power and \(e\geqslant2\), and suppose that
\(X^r(X^{q-1}+a)\) permutes \(\mathbb F_{q^e}\).  Put \(\rho=[r]_{\ell_e(q)}\) and
\(s_d=\lfloor\ell_d(q)\rho/\ell_e(q)\rfloor\) for \(0\leqslant d\leqslant e\).
Then \(1\leqslant\rho<\ell_e(q)\), \(\rho\equiv1\pmod q\), and
\[
 s_{d+1}-qs_d\in\{0,q\}\quad\text{for all }0\leqslant d\leqslant e-2.
\]
\end{corollary}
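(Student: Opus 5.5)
The first two assertions, \(1\leqslant\rho<\ell_e(q)\) and \(\rho\equiv1\pmod q\), come directly from the case \(N=1\) of Lemma~\ref{lem:pure}. The substance of the corollary is the quotient recursion, and for it I would read off the quotients \(s_d\) from the Farey data produced by Proposition~\ref{prop:propagation}.

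First handle \(d=0\). Here \(s_0=0\). Since \(\ell_1(q)=1\) and \(\rho<\ell_e(q)\), we also get \(s_1=\lfloor\rho/\ell_e(q)\rfloor=0\). Hence \(s_1-qs_0=0\in\{0,q\}\).

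Next fix \(1\leqslant N\leqslant e-1\). By Proposition~\ref{prop:propagation}(1) there is an adjacent pair \(\theta_-<\theta_+\) in \(F_N\) with \(\rho/\ell_e(q)\in K_N^{(q)}(\theta_-,\theta_+)\). Lemma~\ref{lem:labels}, applied with \(\xi=\theta_-\) and \(y=\rho/\ell_e(q)\), then gives
\[
 s_N=\left\lfloor\frac{\ell_N(q)\rho}{\ell_e(q)}\right\rfloor=n_N^{(q)}(\theta_-).
\]
So every \(s_N\) with \(1\leqslant N\leqslant e-1\) equals the label \(n_N^{(q)}\) of the current lower endpoint.

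Now take \(1\leqslant d\leqslant e-2\), and let \(\theta_-\) and \(\theta_-'\) be the lower endpoints at levels \(d\) and \(d+1\). As in the proof of Proposition~\ref{prop:propagation}, Corollary~\ref{cor:one-step-intervals} puts \(\theta_-'\) in \([\theta_-,\theta_+)\), which is a half-open interval between adjacent terms of \(F_d\). Since \(n_d^{(q)}\) is constant on such intervals (Lemma~\ref{lem:labels}), we get \(s_d=n_d^{(q)}(\theta_-')\). We also have \(s_{d+1}=n_{d+1}^{(q)}(\theta_-')\). The recurrence \eqref{eq:strong-state} at level \(d+1\geqslant2\) with \(\xi=\theta_-'\) then yields
\[
 s_{d+1}-qs_d=q\,\varepsilon_{d+1}(\theta_-')\in\{0,q\},
\]
because \(\varepsilon_{d+1}\in\{0,1\}\).

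The main point needing care is that both labels are evaluated at the same point \(\theta_-'\). The level-\(d\) identification is made at \(\theta_-\), but \(\theta_-'\) may be a newly inserted mediant with \(\theta_-'\neq\theta_-\). The constancy of \(n_d^{(q)}\) on the level-\(d\) Farey cell is exactly what bridges this gap. For \(d=e-2\) the level \(d+1=e-1\) is still covered by Proposition~\ref{prop:propagation}(1), so no separate boundary argument is needed.
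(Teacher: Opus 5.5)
Your proof is correct and follows essentially the same route as the paper's. Both identify \(s_N=n_N^{(q)}(\theta_-)\) from the \(K_N^{(q)}\)-membership in Proposition~\ref{prop:propagation}(1) and Lemma~\ref{lem:labels}, use Corollary~\ref{cor:one-step-intervals} with the constancy of the level-\(d\) label on the coarser Farey cell so that both quotients are evaluated at the same lower endpoint, and conclude from the recurrence \eqref{eq:strong-state}, with \(d=0\) handled by \(s_0=s_1=0\).
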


\begin{proof}
Apply Proposition~\ref{prop:propagation} through level \(e-1\).
For \(N\geqslant2\), let \(\theta_-<\theta_+\) be the pair in
Proposition~\ref{prop:propagation}\textup{(1)} at level \(N\).  By
Corollary~\ref{cor:one-step-intervals}, \(\theta_-\) lies in the same
level-\((N-1)\) half-open Farey interval as the left endpoint of the preceding
pair.  The \(K_N^{(q)}\)-membership in
Proposition~\ref{prop:propagation}\textup{(1)} and
Lemma~\ref{lem:labels} give \(s_N=n_N^{(q)}(\theta_-)\); applying the same
lemma to the containing level-\((N-1)\) cell gives
\(s_{N-1}=n_{N-1}^{(q)}(\theta_-)\).  Hence
\(s_N-qs_{N-1}=q\varepsilon_N(\theta_-)\in\{0,q\}\).
Initially, \(s_0=s_1=0\), and Lemma~\ref{lem:pure} gives
\(\rho\equiv1\pmod q\).  For \(e=2\), only \(d=0\) occurs.
\end{proof}

\subsection*{Ordering the cyclic rotations}
At this point the Hermite, Lucas, and Farey parts of the argument are complete.  It remains only
to carry out the third stage: convert the binary quotient recursion in
Corollary~\ref{cor:quotients} into the inverse congruence.  This step is purely
arithmetic and requires only an integer base \(q>2\).  For related lexicographic
arrays, see \cite{Zamboni26}; the argument below is self-contained and uses only elementary number theory.

\begin{proposition}
\label{prop:rotations}
Let \(q>2\) and \(e\geqslant2\) be integers, and put
\(\ell_j(q)=(q^j-1)/(q-1)\) for \(j\geqslant0\).  Suppose
\[
 1\leqslant\rho<\ell_e(q),\qquad\text{and}\qquad \rho\equiv1\pmod q,
\]
and put \(s_d=\lfloor\ell_d(q)\rho/\ell_e(q)\rfloor\) for
\(0\leqslant d\leqslant e\).  If
\[
 s_{d+1}-qs_d\in\{0,q\}\quad\text{for all }0\leqslant d\leqslant e-2,
\]
then
\(\rho\ell_h(q)\equiv1\pmod{\ell_e(q)}\) for some
\(1\leqslant h<e\) with \(\gcd(h,e)=1\).
\end{proposition}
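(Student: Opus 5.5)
Write $L=\ell_e(q)$ and $x=\rho/L\in(0,1)$.  The plan is to read the quotient recursion as a binary word and identify $x$ exactly as a cyclic-rotation value, from which the inverse congruence follows.

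\emph{Step 1: the binary word.}  For $0\leqslant d\leqslant e-2$ put $w_d:=(s_{d+1}-qs_d)/q\in\{0,1\}$.  Since $s_0=s_1=0$, we get $w_0=0$, and induction gives $s_d=q\sum_{i<d}w_iq^{d-1-i}$.  Next I want the last quotient step, $s_e-qs_{e-1}$.  Since $\ell_e(q)=L$, we have $s_e=\rho$.  Also $\rho\equiv1\pmod q$ and $qs_{e-1}\equiv0\pmod q$, so $s_e-qs_{e-1}\equiv1\pmod q$.  Because $0\leqslant \ell_d x-s_d<1$ and $\ell_{d+1}=q\ell_d+1$, each difference $s_{d+1}-qs_d$ lies in $[0,q]$ (up to a fractional correction below $q+1$).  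Hence $s_e-qs_{e-1}=1$, and I set $w_{e-1}:=1/q$ formally.  Better to absorb this: define $W:=\sum_{i=0}^{e-2}w_iq^{e-2-i}$, so that $s_{e-1}=qW$ and $\rho=q^2W+1$.

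\emph{Step 2: the conditions as a rotation inequality.}  Each condition $s_d=\lfloor \ell_d\rho/L\rfloor$ says $s_d\leqslant \ell_d\rho/L<s_d+1$.  I would rewrite $\ell_d\rho/L$ using $q^d L=\ell_{e+d}-\ell_d$, so that it compares $\rho$ with $L$ times the ratio $s_d/\ell_d$.  The natural object is the length-$e$ cyclic binary word $u=(w_0,\dots,w_{e-2},\ast)$, appropriately completed so that $\rho=\mathcal V_e$-type value.  Concretely, I expect the following identification, in the spirit of \eqref{eq:four-endpoints}: the conditions for all $d$ say that $x$ lies in the interval cut out by the mechanical words.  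This forces $u$ to be a Christoffel-type word.  That is, its digit sums satisfy $\sum_{i<d}w_i=\lfloor d\theta\rfloor$-pattern for some $\theta=h'/e$.  Equivalently, the rotation of $u$ beginning at position $0$ is the lexicographically smallest rotation, and the value $\rho$ equals that rotation read in base $q$ with a tail of $1$.

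\emph{Step 3: the inverse congruence.}  Multiplying $\rho$ by $q^k$ modulo $L$ cyclically rotates the base-$q$ length-$e$ digit vector.  This holds because $q^e\equiv1\pmod L$ and the digits are $0/1$, which is the reason $q>2$ matters.  The vector $(1,1,\dots,1,0,\dots,0)$ with $h$ ones is $\ell_h(q)$.  So it suffices to show two things.  First, the cyclic word of $\rho$ has exactly $h$ ones with $\gcd(h,e)=1$.  Second, $\rho\cdot\ell_h\equiv1$, i.e.\ $\sum_{j<h}q^j\rho$ has digit vector summing rotations into $(1,0,\dots,0)$.  For a Christoffel word of slope $h/e$ the $h$ rotations indexed by the multiples of the inverse step form a ``staircase'' whose column sums are $1$ except at one position.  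This is the standard balanced-word identity, and I would prove it directly by the floor-sum description: $\sum_{j}\lfloor (i+1+jk)h/e\rfloor$ telescopes.  Primitivity (no carries, and the total equals $\ell_e$ minus a correction) yields $\gcd(h,e)=1$.

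\emph{Main obstacle.}  The hard step is Step 2: extracting from the weak floor conditions, for $d\leqslant e$ only, that the word is exactly a (lower) Christoffel word and not merely balanced on prefixes.  I would first try to show, by induction on $d$, that $\rho/L$ lies in the Farey cell whose $\sigma$ matches.  This would reuse Lemma~\ref{lem:labels} and Corollary~\ref{cor:one-step-intervals} with $q$ an arbitrary integer, since those arguments never used primality, giving $\rho=\sigma_e(\theta)$-like data.  I would then close the last level by the condition $s_e=\rho$, which pins $\theta$ to a fraction of denominator exactly $e$.
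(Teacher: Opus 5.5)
\textbf{There is a genuine gap.} Your Step~1 is correct and matches the paper's opening. The last quotient step is forced to equal $1$, so $\rho=q^2W+1$ is the base-$q$ value of the binary word $\delta_1\cdots\delta_{e-2}01$, with $\delta_i=w_i$.

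The gap is Step~2. You flag it as the main obstacle, but it is in fact the entire content of the proposition, and you only state an expectation (that the word is ``Christoffel-type'') with no argument. The Farey route you propose does not transfer as it stands. In the paper, the localization $\rho/\ell_e(q)\in K_N^{(q)}$ and the congruence $\rho\equiv\sigma_N^{(q)}(\theta_-)\pmod{q^N}$ come from Lemma~\ref{lem:farey-localization} and Propositions~\ref{prop:odd-selector} and~\ref{prop:two-selector}, which rest on Hermite power sums. Those are not among the purely arithmetic hypotheses here, so you would have to rebuild the localization from the floor recursion alone, and you give no indication how.

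The missing idea is a direct numerical comparison of rotations:
\begin{itemize}
\item Let $\rho_d$ be the base-$q$ value of the $d$-th left cyclic rotation of the word. Moving the first $d$ digits to the end gives $q^d\rho=(s_d+\delta_d)(q^e-1)+\rho_d$ for $1\leqslant d\leqslant e-2$, and $q^{e-1}\rho=s_{e-1}(q^e-1)+\rho_{e-1}$.
\item Since $s_d=\lfloor(q^d-1)\rho/(q^e-1)\rfloor$ and $|\rho_d-\rho|<q^e-1$, taking floors gives: $\rho_d<\rho$ exactly when the $d$-th rotation ends in $1$.
\item This single criterion does all the work. It makes the rotations distinct. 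It places the word of $\rho$ at lexicographic rank $k-1$, where $k$ is the number of $1$'s. It shows that left rotation acts on ranks by $j\mapsto[j+k]_e$, which is an $e$-cycle, hence $\gcd(k,e)=1$.
\item Finally, for $h$ with $hk\equiv1\pmod e$, the rotation by $h$ is the immediate successor of the word and differs from it only by $01\to10$ at the end. Hence $q^h\rho\equiv\rho+q-1\pmod{q^e-1}$, which is $\rho\ell_h(q)\equiv1\pmod{\ell_e(q)}$.
\end{itemize}

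Step~3 also contains an error: the exponent $h$ is not the number of $1$'s in the word but its inverse modulo $e$. For example, take $e=5$ and $\rho=q^3+1$, with word $01001$ and $k=2$. It satisfies all hypotheses, since $(s_1,\dots,s_4)=(0,0,q,q^2)$. Yet $\rho\ell_2(q)-1=q^4+q^3+q$ is not divisible by $\ell_5(q)$, whereas $\rho\ell_3(q)=q^5+\ell_5(q)\equiv1\pmod{\ell_5(q)}$. Similarly, the word $\delta_1\cdots\delta_{e-2}01$ representing $\rho$ is in general not the lexicographically smallest of its rotations. It is the largest rotation ending in $1$, at rank $k-1$, and it is exactly this position, supplied by the rotation-order criterion, that makes the successor argument work.
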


\begin{proof}
Note that \(s_0=s_1=0\).  For \(1\leqslant d\leqslant e-2\), set
\(\delta_d=(s_{d+1}-qs_d)/q\in\{0,1\}\).  (The range is empty when
\(e=2\).)  Iterating from \(s_1=0\) gives
\[
 s_d=\sum_{i=1}^{d-1}\delta_iq^{d-i}
 \quad\text{for all }1\leqslant d\leqslant e-1.
\]
Also \(q\ell_{e-1}(q)=\ell_e(q)-1\).  Writing \(\rho=qm+1\) and using
\(0<\rho<\ell_e(q)\), we obtain
\[
 s_{e-1}
 =\left\lfloor\frac{\rho}{q}-\frac{\rho}{q\ell_e(q)}\right\rfloor
 =m=\frac{\rho-1}{q}.
\]
Consequently
\[
 \rho=1+\sum_{i=1}^{e-2}\delta_iq^{e-i}.
\]
Thus \(\rho\) is the base-\(q\) value of a length-\(e\) word \(W\) with
binary digits, ending in \(01\): explicitly, \(W=01\) if \(e=2\), and
\(W=\delta_1\cdots\delta_{e-2}01\) if \(e\geqslant3\).

Let \(L\) denote left cyclic rotation, and let \(\rho_d\) be the base-\(q\)
value of \(L^dW\).  Thus \(\rho_0=\rho\).  Since \(W\) has only the digits
\(0,1\), is nonzero, and is not the all-one word, we have
\(0<\rho_d<\ell_e(q)<q^e-1\).  The first \(d\) digits of \(W\) have
base-\(q\) value \(s_d+\delta_d\) for \(1\leqslant d\leqslant e-2\).
Moving these digits cyclically to the end therefore gives
\[
 q^d\rho=(s_d+\delta_d)(q^e-1)+\rho_d
 \quad\text{for all }1\leqslant d\leqslant e-2,
 \qquad
 q^{e-1}\rho=s_{e-1}(q^e-1)+\rho_{e-1}.
\]
For \(1\leqslant d\leqslant e-2\), subtracting \(\rho\) and dividing by
\(q^e-1\) gives
\[
 \frac{(q^d-1)\rho}{q^e-1}
 =s_d+\delta_d+\frac{\rho_d-\rho}{q^e-1}.
\]
Since \(s_d=\lfloor (q^d-1)\rho/(q^e-1)\rfloor\), taking floors gives
\[
 0=\delta_d+\left\lfloor\frac{\rho_d-\rho}{q^e-1}\right\rfloor.
\]
Thus \(\rho_d<\rho\) exactly when \(\delta_d=1\).  For \(d=e-1\), the same
comparison gives \(\rho_{e-1}\geqslant\rho\).  Since the last digit of
\(L^dW\) is \(\delta_d\) for \(1\leqslant d\leqslant e-2\), and is \(0\)
for \(d=e-1\), we have proved
\begin{equation}
 \rho_d<\rho
 \quad\Longleftrightarrow\quad
 L^dW\text{ ends in }1
 \quad\text{for all }1\leqslant d<e.
 \label{eq:rotation-order}
\end{equation}
We shall use only the rotation-order criterion~\eqref{eq:rotation-order} from now on.
In particular, the \(e\) cyclic rotations are distinct: if \(L^dW=W\) for some
\(1\leqslant d<e\), then \(L^dW\) ends in \(1\), so
\eqref{eq:rotation-order} would give \(\rho_d<\rho\), a contradiction.

Let \(k\) be the number of \(1\)'s in \(W\).  Since \(W\) ends in \(01\),
we have \(1\leqslant k<e\).  Sort the \(e\) rotations \(L^dW\),
\(0\leqslant d<e\), in increasing lexicographic order, equivalently by their
base-\(q\) values \(\rho_d\), and assign them lexicographic ranks
\(0,\ldots,e-1\), starting with rank \(0\) for the smallest rotation.
Exactly \(k-1\) nontrivial rotations end in \(1\), and by
\eqref{eq:rotation-order} these are precisely the rotations smaller than \(W\).
Hence \(W\) has rank \(k-1\).  It follows that the rotations ending in \(1\) have
ranks \(0,\ldots,k-1\), while the rotations beginning in \(1\) have ranks
\(e-k,\ldots,e-1\).

Right cyclic rotation sends the rotations ending in \(1\) to those beginning in \(1\)
and preserves their relative order; within a fixed last-digit class,
lexicographic order is determined by the first \(e-1\) digits.  The same holds
for the rotations ending in \(0\).  For an integer \(z\), let \([z]_e\) denote its
least nonnegative residue modulo \(e\).  Thus right rotation sends rank \(j\) to \([j-k]_e\),
and left rotation sends rank \(j\) to \([j+k]_e\).  Since the rotations are
distinct, left rotation is an \(e\)-cycle.  Therefore translation by \(k\)
on \(\mathbb Z/e\mathbb Z\) has order \(e\), and hence \(\gcd(k,e)=1\).

Choose \(1\leqslant h<e\) with \(hk\equiv1\pmod e\), and put
\(W'=L^hW\).  Since \(W\) has rank \(k-1\), the word \(W'\) has rank
\(k\), so it is the immediate successor of \(W\).  Number the digit
positions from \(0\) to \(e-1\) from left to right.  The rank of \(L^iW\) is
\([(i+1)k-1]_e\), while the rank of \(L^iW'\) is \([(i+1)k]_e\).  A rotation
begins in \(1\) exactly when its rank is at least \(e-k\).  Hence \(W_i\)
and \(W'_i\) differ only when \([(i+1)k]_e=e-k\) or \(0\).  Since
\(\gcd(k,e)=1\), these two cases occur exactly for \(i=e-2\) and \(i=e-1\),
respectively.  Thus \(W\) and \(W'\) agree in their first \(e-2\) digits
and end in \(01\) and \(10\), respectively.  Therefore the base-\(q\) value
of \(W'\) is \(\rho+q-1\).  Since \(W'=L^hW\),
\[
 q^h\rho\equiv\rho+q-1\pmod{q^e-1}.
\]
Using \(q^h-1=(q-1)\ell_h(q)\) and
\(q^e-1=(q-1)\ell_e(q)\), we obtain
\(\rho\ell_h(q)\equiv1\pmod{\ell_e(q)}\).
\end{proof}

\begin{proof}[Proof of Theorem~\ref{thm:main}]
If \(q=2\), then \((-a)^{\ell_e(q)}=a^{2^e-1}=1\) for every
\(a\in\mathbb F_{2^e}^{\times}\), and also \(f(0)=f(a)=0\); hence no
permutation occurs.  Let \(q>2\).  Necessity of
\textup{(\ref{item:coprime})}--\textup{(\ref{item:norm})} follows from
Lemma~\ref{lem:elementary}, and \textup{(\ref{item:inverse})} from
Corollary~\ref{cor:quotients} and Proposition~\ref{prop:rotations}.  Conversely,
Lemma~\ref{lem:elementary} gives sufficiency.
\end{proof}

\begin{proof}[Proof of Corollary~\ref{cor:number-residues}]
For \(\gcd(h,e)=1\), set \(D=\gcd(\ell_h(q),\ell_e(q))\).  Then
\(D\mid\gcd(q^h-1,q^e-1)=q-1\), so \(q\equiv1\pmod D\), whence
\(\ell_h(q)\equiv h\pmod D\) and \(\ell_e(q)\equiv e\pmod D\).  Thus \(D\mid h\)
and \(D\mid e\), so \(D=1\); hence each such \(\ell_h(q)\) is a unit modulo
\(\ell_e(q)\).  Moreover, \(\ell_h(q)=1+q+\cdots+q^{h-1}\), so for
\(1\leqslant h<e\) one has \(0<\ell_h(q)<\ell_e(q)\), and
\(h\mapsto\ell_h(q)\) is strictly increasing.  Thus the \(\varphi(e)\)
admissible values of \(h\) give distinct unit classes modulo \(\ell_e(q)\), and
inversion preserves distinctness.  Their inverses therefore give exactly
\(\varphi(e)\) residue classes.
\end{proof}

\begin{proof}[Proof of Corollary~\ref{cor:number-functions}]
If \(e=1\), the functions are \((1+a)x^r\) on \(\mathbb F_q^\times\):
there are \(q-2\) possible scalars \(1+a\in\mathbb F_q^\times\setminus\{1\}\) and
\(\varphi(q-1)\) permutation exponent classes, and equality of two such functions forces the
same scalar and \(r\equiv r'\pmod{q-1}\).  This gives the formula since
\(\ell_1(q)=\varphi(1)=1\).

Let \(e\geqslant2\).  For \(q=2\) there are no such permutations, so assume \(q>2\).
Corollary~\ref{cor:number-residues} gives
\(\varphi(e)\) admissible unit classes modulo \(\ell_e(q)\).  The reduction map
\((\mathbb Z/(q^e-1)\mathbb Z)^\times\to(\mathbb Z/\ell_e(q)\mathbb Z)^\times\)
is surjective: given a unit class modulo \(\ell_e(q)\), choose an integer in that
class which is congruent to \(1\) modulo every prime divisor of \(q^e-1\) not
dividing \(\ell_e(q)\); the Chinese remainder theorem applies because those
primes are coprime to \(\ell_e(q)\).  Every fibre therefore has size
\(\varphi(q^e-1)/\varphi(\ell_e(q))\); hence
conditions~\textup{(\ref{item:coprime})} and \textup{(\ref{item:inverse})}
give \(\varphi(e)\varphi(q^e-1)/\varphi(\ell_e(q))\) exponent classes.
Because \(a\mapsto-a\) is a bijection, condition~\textup{(\ref{item:norm})}
excludes exactly the \(\ell_e(q)\) roots of \(z^{\ell_e(q)}=1\), leaving
\((q-2)\ell_e(q)\) coefficients.

Distinct admissible pairs give distinct functions.  Modulo \(X^{q^e}-X\), each
pair has a reduced two-term representative with
distinct nonzero exponent residues \(r\) and \(r+q-1\); the latter cannot vanish
without violating condition~\textup{(\ref{item:coprime})}.  Equality for
\((r,a)\) and \((r',a')\) therefore matches the two supports either in the same
order or in the opposite order; the latter implies \(2(q-1)\equiv0\pmod{q^e-1}\), hence
\(\ell_e(q)\mid2\), impossible since \(\ell_e(q)\geqslant q+1\geqslant4\).  Thus
\(r\equiv r'\pmod{q^e-1}\) and \(a=a'\).  Multiplying the counts proves the
formula.
\end{proof}

\section{A coprime-index extension and further directions}
\label{sec:general-d}

A natural next problem is to study the broader family
\[
 f_{r,d,a}(X):=X^r\bigl(X^{d(q-1)}+a\bigr)
 \in\mathbb F_{q^e}[X].
\]
This broader family was also studied by Hou--Pallozzi Lavorante \cite{HPL23}.
From the quotient viewpoint, however, \(d\) itself is not the most natural
organizing parameter.  Set
\[
 \eta:=\gcd\bigl(d,\ell_e(q)\bigr).
\]
On the cyclic group
\(\boldsymbol{\mu}_{\ell_e(q)}:=\{\omega\in\mathbb F_{q^e}^{\times}:\omega^{\ell_e(q)}=1\}\),
the unit group
\((\mathbb Z/\ell_e(q)\mathbb Z)^\times\) acts on the exponent residue \(d\) by
multiplication, and its orbits are determined by \(\gcd(d,\ell_e(q))\).
Equivalently, a unit-power reparametrization changes \(d\) within its fixed
\(\eta\)-class while rescaling \(r\) compatibly.  Thus \(\eta\), rather than
\(d\) alone, is the natural first invariant for organizing the general problem.

The case \(\eta=1\) is already completely determined by
Theorem~\ref{thm:main}.

\begin{corollary}[The coprime-index case]
\label{cor:general-d-coprime}
Let \(q\) be a prime power, let \(e\geqslant2\), let \(d,r\geqslant1\), and let
\(a\in\mathbb F_{q^e}^{\times}\).  Suppose that
\(\gcd(d,\ell_e(q))=1\).  Then
\(f_{r,d,a}(X)=X^r\bigl(X^{d(q-1)}+a\bigr)\) permutes
\(\mathbb F_{q^e}\) if and only if
\begin{enumerate}[label=\textup{(\roman*)},ref=\roman*]
\item \(\gcd(r,q-1)=1\);
\item \((-a)^{\ell_e(q)}\ne1\);
\item there exists \(h\) with \(1\leqslant h<e\), \(\gcd(h,e)=1\), and
\(r\ell_h(q)\equiv d\pmod{\ell_e(q)}\).
\end{enumerate}
\end{corollary}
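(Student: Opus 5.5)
The plan is to reduce $f_{r,d,a}$ to a binomial of the form $X^{r'}(X^{q-1}+a)$ by composing with a suitable permutation monomial on the right, and then to apply Theorem~\ref{thm:main}.

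\emph{Construction of the monomial.} Since $\gcd(d,\ell_e(q))=1$, I choose an integer $s\geqslant1$ with
\begin{itemize}
\item[] $s\equiv d\pmod{\ell_e(q)}$,
\item[] $s\equiv 1\pmod{\text{every prime dividing } q^e-1 \text{ but not } \ell_e(q)}$.
\end{itemize}
By the Chinese remainder theorem this gives $\gcd(s,q^e-1)=1$; this is the same surjectivity argument as in the proof of Corollary~\ref{cor:number-functions}. Let $t$ be an inverse of $s$ modulo $q^e-1$, so $X^t$ permutes $\mathbb F_{q^e}$. Since $q^e-1=(q-1)\ell_e(q)$, we have $d(q-1)\equiv s(q-1)\pmod{q^e-1}$. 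Hence, as functions on $\mathbb F_{q^e}$,
\[
 f_{r,d,a}\circ X^{t}=X^{rt}\bigl(X^{(q-1)st}+a\bigr)\equiv X^{rt}\bigl(X^{q-1}+a\bigr)\pmod{X^{q^e}-X}.
\]
Here I use $st\equiv1\pmod{q^e-1}$, and $rt\geqslant1$. It follows that $f_{r,d,a}$ permutes $\mathbb F_{q^e}$ if and only if $X^{rt}(X^{q-1}+a)$ does.

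\emph{Applying Theorem~\ref{thm:main} with $r'=rt$.} The theorem says $X^{rt}(X^{q-1}+a)$ permutes $\mathbb F_{q^e}$ if and only if the following three conditions hold. I translate each one back in terms of $r$ and $d$.
\begin{itemize}
\item[] \emph{Condition (i): $\gcd(rt,q-1)=1$.} Since $t$ is a unit modulo $q^e-1$, it is also a unit modulo $q-1$. So this is equivalent to $\gcd(r,q-1)=1$.
\item[] \emph{Condition (ii).} This is unchanged.
\item[] \emph{Condition (iii): $rt\,\ell_h(q)\equiv1\pmod{\ell_e(q)}$.} Multiply by $s$ and use $st\equiv1$ and $s\equiv d$ modulo $\ell_e(q)$. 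This shows it is equivalent to $r\ell_h(q)\equiv d\pmod{\ell_e(q)}$. The step is reversible because $t$ is invertible modulo $\ell_e(q)$.
\end{itemize}

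The only point needing care is choosing $s$ coprime to all of $q^e-1$, not merely to $\ell_e(q)$. This is exactly what the CRT choice above handles. Everything else is a direct substitution, so I expect no genuine obstacle.
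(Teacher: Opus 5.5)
Your proof is correct, and it takes a somewhat different route from the paper's. Both proofs reduce to Theorem~\ref{thm:main} for an exponent congruent to \(rd^{-1}\) modulo \(\ell_e(q)\); they differ in how that reduction is made.

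The paper works on the quotient \(\boldsymbol{\mu}_{\ell_e(q)}\). It uses the fiber criterion: \(f_{r,d,a}\) permutes \(\mathbb F_{q^e}\) if and only if \(\gcd(r,q-1)=1\), \(f_{r,d,a}\) has no nonzero root, and \(\Phi(X)=X^r(X^d+a)^{q-1}\) permutes \(\boldsymbol{\mu}_{\ell_e(q)}\). It then assumes (i) and chooses, by the Chinese remainder theorem, an \(R\equiv rd^{-1}\pmod{\ell_e(q)}\) with \(\gcd(R,q-1)=1\); this existence argument uses \(\gcd(r,q-1)=1\). With \(\Psi(X)=X^R(X+a)^{q-1}\) it writes \(\Phi(y)=\Psi(y^d)\), and applies the same fiber criterion to \(X^R(X^{q-1}+a)\) before invoking Theorem~\ref{thm:main}.

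You instead lift \(d\) itself to a unit \(s\) modulo \(q^e-1\). You then precompose with the permutation monomial \(X^t\), where \(t\equiv s^{-1}\), and get the functional identity \(f_{r,d,a}\circ X^t=X^{rt}(X^{q-1}+a)\) on all of \(\mathbb F_{q^e}\). Your \(rt\) is a legitimate choice of the paper's \(R\). However, you never need the fiber criterion, and your Chinese-remainder step depends only on \(\gcd(d,\ell_e(q))=1\), not on \(r\). All three conditions then transfer mechanically, with (i) coming from \(\gcd(t,q-1)=1\).

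Your argument is shorter and more elementary. The paper's quotient formulation has a different advantage. The relation \(\Phi(y)=\Psi(y^d)\) still makes sense when \(\eta=\gcd(d,\ell_e(q))>1\); there \(y\mapsto y^d\) is \(\eta\)-to-one and your unit lift of \(d\) no longer exists. That is exactly how the paper frames the remaining open regime.
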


\begin{proof}
Write \(Q=q^e\), and let
\(\Phi(X)=X^r(X^d+a)^{q-1}\in\mathbb F_Q[X]\).  For
\(x\in\mathbb F_Q^\times\), put \(y=x^{q-1}\in
\boldsymbol{\mu}_{\ell_e(q)}\).  The fibers of \(x\mapsto x^{q-1}\) are the
\(\mathbb F_q^\times\)-cosets, and
\(f_{r,d,a}(\zeta x)=\zeta^rf_{r,d,a}(x)\).  Hence \(f_{r,d,a}\) permutes
\(\mathbb F_Q\) if and only if all three of the following conditions hold:
\(\gcd(r,q-1)=1\), \(f_{r,d,a}\) has no nonzero root, and the restriction of
\(\Phi\) to \(\boldsymbol{\mu}_{\ell_e(q)}\) permutes this group.  Since
\(\gcd(d,\ell_e(q))=1\), the map \(y\mapsto y^d\) permutes
\(\boldsymbol{\mu}_{\ell_e(q)}\); consequently \(f_{r,d,a}\) has a nonzero
root exactly when \((-a)^{\ell_e(q)}=1\).

If \(\gcd(r,q-1)>1\), then \(f_{r,d,a}\) does not permute \(\mathbb F_Q\),
so condition~\textup{(i)} is necessary.  We may therefore assume
\(\gcd(r,q-1)=1\).  Choose \(\delta\) with
\(d\delta\equiv1\pmod{\ell_e(q)}\), and choose a positive integer \(R\) with
\(R\equiv r\delta\pmod{\ell_e(q)}\) and \(\gcd(R,q-1)=1\).  Such an \(R\)
exists by the Chinese remainder theorem: if a prime \(\pi\) divides both
\(q-1\) and \(\ell_e(q)\), then \(\pi\nmid r\delta\); for each remaining
prime \(\pi\mid q-1\), impose \(R\equiv1\pmod\pi\).  Let
\(\Psi(X)=X^R(X+a)^{q-1}\in\mathbb F_Q[X]\).  Since
\(dR\equiv r\pmod{\ell_e(q)}\), for every
\(y\in\boldsymbol{\mu}_{\ell_e(q)}\) we have
\[
 \Phi(y)=\Psi(y^d).
\]
Thus the restriction of \(\Phi\) to \(\boldsymbol{\mu}_{\ell_e(q)}\) permutes
this group exactly when the restriction of \(\Psi\) does.  If
\(x\in\mathbb F_Q^\times\) and \(y=x^{q-1}\), then
\[
 \bigl(x^R(x^{q-1}+a)\bigr)^{q-1}=\Psi(y).
\]
Since \(\gcd(R,q-1)=1\), the same fiber criterion and
Theorem~\ref{thm:main} show that \(X^R(X^{q-1}+a)\) permutes
\(\mathbb F_Q\) exactly when \((-a)^{\ell_e(q)}\ne1\) and
\(R\ell_h(q)\equiv1\pmod{\ell_e(q)}\) for some \(h\) with
\(1\leqslant h<e\) and \(\gcd(h,e)=1\).  Since
\(R\equiv r\delta\pmod{\ell_e(q)}\), this congruence is equivalent to
\(r\ell_h(q)\equiv d\pmod{\ell_e(q)}\).
\end{proof}

Thus the first genuinely new regime begins when \(\eta>1\): the map
\(y\mapsto y^d\) then has kernel of order \(\eta\) on
\(\boldsymbol{\mu}_{\ell_e(q)}\), so the preceding reduction to
Theorem~\ref{thm:main} is no longer available.  A natural next step is to
organize the general classification by \(\eta=\gcd(d,\ell_e(q))\) and analyze
how this \(\eta\)-to-one quotient geometry interacts with \(r\) and \(a\).

\section*{Acknowledgments}
This work was supported by the National Natural Science Foundation of China (No.~12441107).

\noindent\textbf{AI disclosure.} OpenAI's ChatGPT (GPT-5.6 Sol) was used to assist with
exploratory proof development and auditing, exploratory computations, literature
searches and reference identification, English-language editing, and LaTeX debugging.
The results and proofs presented in this paper are entirely theoretical and do not
rely on computer-assisted calculations, exhaustive computation, or computer-assisted
proof.  All mathematical arguments, results, and
conclusions were independently checked by the author. All AI-assisted outputs were
critically reviewed and edited by the author, who takes full responsibility for the
correctness and content of the paper.

\providecommand{\bysame}{\leavevmode\hbox to3em{\hrulefill}\thinspace}
\providecommand{\MR}{\relax\ifhmode\unskip\space\fi MR }
\providecommand{\MRhref}[2]{%
  \href{http://www.ams.org/mathscinet-getitem?mr=#1}{#2}
}
\providecommand{\href}[2]{#2}

\end{document}